\numberwithin{equation}{section}
\def\G{{\mathbb G}}
\def\K{{\mathbb K}}
\def\P{{\mathbb P}}
\def\Z{{\mathbb Z}}
\newtheorem{theorem}{Theorem}[section]
\newtheorem{lemma}[theorem]{Lemma}
\newtheorem{proposition}[theorem]{Proposition}
\newtheorem{corollary}[theorem]{Corollary}
\theoremstyle{definition}
\newtheorem{notation and remakrs}[theorem]{Notation and Remarks}
\newtheorem{convention and reminder}[theorem]{Convention and Reminder}
\newtheorem{convention and remark}[theorem]{Convention and Remark}
\newtheorem{definition and remark}[theorem]{Definition and Remark}
\newtheorem{reminders and definition}[theorem]{Reminders and Definition}
\newtheorem{remark and notations}[theorem]{Remark and Notations}
\newtheorem{notation and remark}[theorem]{Notation and Remark}
\newtheorem{example}[theorem]{Example}
\newcommand\Ker{\operatorname{\Ker}}
\begin{document}

\title{On rank $3$ quadratic equations of Veronese embeddings}

\author[E. Park]{Euisung Park}
\email{euisungpark@korea.ac.kr}
\address{Euisung Park, Department of Mathematics, Korea University, Seoul 136-701, Republic of Korea}

\author[S. Sim]{Saerom Sim}
\email{saeromiii@korea.ac.kr}
\address{Saerom Sim, Department of Mathematics, Korea University, Seoul 136-701, Republic of Korea}

\keywords{rank of quadratic equation, low rank loci, minimal irreducible decomposition, Veronese embedding}
\subjclass[2010]{14E25, 13C05, 14M15, 14A10}

\maketitle

\begin{abstract}
This paper studies the geometric structure of the locus $\Phi_3 (X)$ of rank $3$ quadratic equations of the Veronese variety $X = \nu_d (\P^n)$. Specifically, we investigate the minimal irreducible decomposition of $\Phi_3 (X)$ of rank $3$ quadratic equations and analyze the geometric properties of the irreducible components of $\Phi_3 (X)$ such as their desingularizations. Additionally, we explore the non-singularity and singularity of these irreducible components of $\Phi_3 (X)$.
\end{abstract}

\section{Introduction}
\noindent Throughout this paper, we work over an algebraically closed field $\K$ of characteristic $0$. Let
$$X = \nu_d (\P^n ) \subset \P^r ,\quad r = {{n+d} \choose {n}}-1 ,$$
be the $d$th Veronese embedding of $\P^n$, and let $I(X)$ be the homogeneous ideal of $X$ in the homogeneous coordinate ring $S = \K [z_0 , z_1 , \ldots , z_r ]$ of $\P^r$. Various structural properties of $I(X)$ have been studied so far. For example, $I(X)$ is generated by quadrics, and the first few syzygy modules are generated by linear syzygies (cf. \cite{BCR}, \cite{Gr1}, \cite{Gr2}, \cite{JPW}, \cite{OP}, etc). Also, $X$ is \textit{determinantally presented}, meaning that $I(X)$ is generated by $2$-minors of a matrix of linear forms in $S$ (cf. \cite{EKS}, \cite{Pu}, \cite{Su}, \cite{SS}, etc). This implies that $I(X)$ is generated by quadrics of rank $\leq 4$. The authors in \cite{HLMP} improved this result by showing that $I(X)$ can always be  generated by quadrics of rank $3$. Now, let
\begin{equation*}
\Phi_3 (X) := \{ [Q] ~|~ Q \in I(X)_2 - \{0\},~ {\rm rank}(Q) = 3 \} \subset \mathbb{P} (I(X)_2 ).
\end{equation*}
be the locus of rank $3$ quadratic equations of $X$. Recently, some basic geometric structures of the projective algebraic set $\Phi_3 (X)$ were shown in \cite{Park}. To be precise, there are finite morphisms
\begin{equation}\label{eq:normalization}
\widetilde{Q_{\ell}} : \mathbb{G} (1,\P^p ) \times \P^q \rightarrow \Phi_3 (X)
\end{equation}
for $1 \le \ell \le d/2$, where $p =  h^0 (\P^n , \mathcal{O}_{\P^n} (\ell)) -1$ and $q = h^0 (\P^n , \mathcal{O}_{\P^n} (d-2\ell)) -1$. Also, letting $W_{\ell} (X)$ be the image of $\mathbb{G} (1,\P^p ) \times \P^q$ by $\widetilde{Q_{\ell}}$, it holds that
\begin{equation}\label{eq:natural irred decomp}
\Phi_3 (X) = \bigcup_{1 \le \ell \le d/2} W_{\ell} (X)
\end{equation}
is an irreducible decomposition of $\Phi_3 (X)$ (cf. \cite[Theorem 1.2]{Park}).

Along this line, the main purpose of this paper is to study the following four basic problems about the irreducible components $W_{\ell} (X)$ of $\Phi_3 (X)$ for $1 \le \ell \le d/2$.\\

\begin{enumerate}
\item[$(i)$] Prove that (\ref{eq:natural irred decomp}) is the minimal irreducible decomposition of $\Phi_3 (X)$.
\item[$(ii)$] Prove that the finite morphism
\begin{equation}\label{eq:Q-map normalization}
\widetilde{Q_{\ell}} : \mathbb{G} (1,\P^p ) \times \P^q \rightarrow W_{\ell} (X)
\end{equation}
induced from (\ref{eq:normalization}) is generically injective and hence the normalization of $W_{\ell} (X)$.
\item[$(iii)$] Prove that $W_{\ell} (X)$ is nondegenerate in $\mathbb{P} (I(X)_2 )$, or equivalently, that $I(X)$ can be generated by quadratic equations in $W_{\ell} (X)$.
\item[$(iv)$] Determine whether $W_{\ell} (X)$ is smooth or singular. \\
\end{enumerate}

First, let us summarize the known facts about these four problems. For $n \geq 2$, problems $(i)$ and $(ii)$ are solved in \cite[Theorem 1.6 and Corollary 1.7]{Park} and \cite[Corollary 3.4]{Park}, respectively. The reason they are only proven for $n \geq 2$ is that their proofs rely on the fact that a general hypersurface of degree $\ell$ of $\P^n$ is irreducible. Problem $(iii)$ is solved only for the case $\ell = 1$. For details, we refer the reader to \cite[Theorem 4.2]{HLMP}. Regarding problem $(iv)$, it is shown in \cite[Theorem 3.7]{Park} that $\Phi_3 (\nu_2 (\P^n ) )= W_1 (\nu_2 (\P^n))$ is isomorphic to the second Veronese embedding of the Plücker embedding of $\mathbb {G} (1,\P^n)$. However, problem $(iv)$ has not yet been studied for $d \geq 3$. \\

Our first main result solves the problems $(i)$ and $(iii)$ when $X$ is a rational normal curve.

\begin{theorem}\label{thm:minimal and birational}
Let $\mathcal{C} \subset \P^d$ be a rational normal curve of degree $d$. Then

\renewcommand{\descriptionlabel}[1]%
             {\hspace{\labelsep}\textrm{#1}}
\begin{description}
\setlength{\labelwidth}{13mm} \setlength{\labelsep}{1.5mm}
\setlength{\itemindent}{0mm}

\item[{\rm (1)}] The minimal irreducible decomposition of $\Phi_3 (\mathcal{C})$ is
$$\Phi_3 (\mathcal{C}) = \bigcup_{1 \le \ell \le d/2} W_{\ell} (\mathcal{C}).$$

\item[{\rm (2)}] For every $1 \le \ell \le d/2$, the projective variety $W_{\ell} (\mathcal{C})$ is non-degenerate in $\P (I(\mathcal{C})_2 )$. Therefore, $I(\mathcal{C})$ can be generated by quadrics in $W_{\ell} (\mathcal{C})$.

\item[{\rm (3)}] $W_1 (\mathcal{C})$ is projectively equivalent to $\nu_2 \left( \P^{d-2} \right)$. In particular, $W_1 (\mathcal{C})$ is smooth.
\end{description}
\end{theorem}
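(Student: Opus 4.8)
I would prove the three parts in the order (1), (3), (2), since part (3) follows almost at once from the spanning statement of part (2) in the case $\ell=1$, so that case must be secured first. Throughout I use the explicit description $\widetilde{Q_\ell}([U],[g])=[Q_{U,g}]$ with $Q_{U,g}:=L_{f_0^{2}g}\,L_{f_1^{2}g}-L_{f_0f_1g}^{2}$ for $U=\langle f_0,f_1\rangle$ (a quadric well-defined up to scalar, independently of the chosen basis of $U$), where $L_h\in S_1$ denotes the linear form corresponding to $h\in H^0(\P^1,\mathcal O(d))$ under the Veronese identification $S_1\cong H^0(\P^1,\mathcal O(d))$, so that $Q_{U,g}\in I(\mathcal C)_2$.

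\emph{Part (1).} First, all components have the same dimension: since $\widetilde{Q_\ell}$ is finite, $\dim W_\ell(\mathcal C)=\dim\bigl(\G(1,\P^\ell)\times\P^{d-2\ell}\bigr)=(2\ell-2)+(d-2\ell)=d-2$ for every $\ell$, so no proper inclusion $W_\ell(\mathcal C)\subsetneq W_{\ell'}(\mathcal C)$ is possible and minimality of the decomposition is equivalent to the $W_\ell(\mathcal C)$ being pairwise distinct. To separate them I attach to each rank $3$ quadric $Q\in I(\mathcal C)_2$ the intrinsic integer $\iota(Q):=\deg\gcd(N_Q)$, where $N_Q\subset H^0(\P^1,\mathcal O(d))$ is the net of binary $d$-forms corresponding to the (unique $3$-dimensional) linear span $V_Q\subset S_1$ of $Q$. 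For $Q=Q_{U,g}$ of rank $3$ one has $V_Q=\langle L_{f_0^2g},L_{f_0f_1g},L_{f_1^2g}\rangle$, hence $N_Q=g\cdot\langle f_0^2,f_0f_1,f_1^2\rangle$ and $\gcd(N_Q)=g\cdot\gcd(f_0,f_1)^{2}$, so $\iota(Q)=(d-2\ell)+2\deg\gcd(f_0,f_1)$. This equals $d-2\ell$ exactly when $U$ is base-point free, which happens on a dense open subset of $\G(1,\P^\ell)\times\P^{d-2\ell}$; as $\widetilde{Q_\ell}$ is finite, the image of that subset contains a dense open of $W_\ell(\mathcal C)$, so $\iota\equiv d-2\ell$ on a dense subset of $W_\ell(\mathcal C)$. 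Since the integers $d-2\ell$ are pairwise distinct, $W_\ell(\mathcal C)\ne W_{\ell'}(\mathcal C)$ for $\ell\ne\ell'$, which gives (1).

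\emph{Part (3).} For $\ell=1$ the pencil is forced to be $\langle s,t\rangle$, so $\widetilde{Q_1}$ is the map $\P^{d-2}\to\P(I(\mathcal C)_2)$, $[g]\mapsto[Q_{\langle s,t\rangle,g}]$. Writing $g=\sum_{k=0}^{d-2}c_k\,s^{d-2-k}t^{k}$ one computes $L_{s^2g}=\sum_k c_kz_k$, $L_{stg}=\sum_k c_kz_{k+1}$, $L_{t^2g}=\sum_k c_kz_{k+2}$, hence $Q_{\langle s,t\rangle,g}=\sum_{k,m}c_kc_m\bigl(z_kz_{m+2}-z_{k+1}z_{m+1}\bigr)$; in particular this quadratic map factors as $\nu_2\colon\P^{d-2}=\P\bigl(H^0(\P^1,\mathcal O(d-2))\bigr)\to\P\bigl(\operatorname{Sym}^2H^0(\P^1,\mathcal O(d-2))\bigr)$ followed by the projectivization of the linear map $\Theta$ with $\Theta(c_k\!\cdot\!c_m)=z_kz_{m+2}+z_mz_{k+2}-2z_{k+1}z_{m+1}$ for $k<m$ and $\Theta(c_k\!\cdot\!c_k)=z_kz_{k+2}-z_{k+1}^{2}$. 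Both source and target of $\Theta$ have dimension $\binom d2$; the quadrics $\Theta(c_k\!\cdot\!c_m)$ with $0\le k\le m\le d-2$ have pairwise distinct leading monomials $z_kz_{m+2}$ for the lexicographic order $z_0>z_1>\cdots>z_d$ (the variable $z_k$ appears in no other monomial of $\Theta(c_k\!\cdot\!c_m)$), so they are linearly independent and $\Theta$ is an isomorphism onto $I(\mathcal C)_2$. Therefore $W_1(\mathcal C)=\P(\Theta)\bigl(\nu_2(\P^{d-2})\bigr)$ is projectively equivalent to $\nu_2(\P^{d-2})$, in particular smooth; and, as a by-product, $W_1(\mathcal C)$ spans $\P(I(\mathcal C)_2)$, which is the case $\ell=1$ of part (2).

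\emph{Part (2) for general $\ell$, and the main obstacle.} The construction of $\widetilde{Q_\ell}$ is $SL_2(\K)$-equivariant, so the linear span $\langle W_\ell(\mathcal C)\rangle$ is a subrepresentation of $I(\mathcal C)_2$. Writing $R_m$ for the irreducible $SL_2(\K)$-module of dimension $m+1$, Clebsch--Gordan gives $I(\mathcal C)_2=\ker\bigl(\operatorname{Sym}^2 R_d\to R_{2d}\bigr)=\bigoplus_{j=1}^{\lfloor d/2\rfloor}R_{2d-4j}$, a multiplicity-free decomposition, so $\langle W_\ell(\mathcal C)\rangle=\bigoplus_{j\in T_\ell}R_{2d-4j}$ for some $T_\ell\subseteq\{1,\dots,\lfloor d/2\rfloor\}$ and it suffices to prove $T_\ell=\{1,\dots,\lfloor d/2\rfloor\}$. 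Since the projection $\operatorname{Sym}^2 R_d\to R_{2d-4j}$ sends a symmetric product $A\!\cdot\!B$ to a nonzero multiple of the $2j$-th transvectant $(A,B)_{2j}$, the $R_{2d-4j}$-component of $Q_{U,g}$ is a nonzero multiple of $(f_0^{2}g,\,f_1^{2}g)_{2j}-(f_0f_1g,\,f_0f_1g)_{2j}$, so the task reduces to exhibiting, for each $j$, one triple $(f_0,f_1,g)$ (with $\deg f_i=\ell$, $\deg g=d-2\ell$) on which this difference is nonzero. Taking $f_0$, $f_1$, $g$ to be monomials in $s,t$ turns this into the non-vanishing of an explicit alternating sum of products of binomial coefficients, and this is the technical heart of the argument: for $j$ well below $d/2$ any sufficiently generic monomial choice works, but near $j=d/2$ transvectants of monomials tend to vanish, so the monomials must be chosen carefully and one is left with a combinatorial identity that is elementary but not wholly routine — for example, in the extreme case $d=2\ell$ the requirement becomes $\binom{2\ell}{\ell}^{2}\ne\sum_i(-1)^i\binom{2j}{i}\binom{2\ell-2j}{\ell-2j+i}^{2}$. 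The remaining ingredients — finiteness and equidimensionality of $\widetilde{Q_\ell}$ from \cite{Park}, the well-definedness of $\gcd$ for nets of binary forms, the leading-monomial bookkeeping, and the passage from nondegeneracy of $W_\ell(\mathcal C)$ to generation of $I(\mathcal C)$ by quadrics in $W_\ell(\mathcal C)$ — are routine.
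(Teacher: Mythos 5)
Parts (1) and (3) of your proposal are correct, and both take a genuinely different route from the paper. For (1), the paper separates $W_{\ell_1}(\mathcal C)$ from $W_{\ell_2}(\mathcal C)$ by exhibiting the quadric $z_0z_{2\ell_2}-z_{\ell_2}^2$ in $W_{\ell_2}(\mathcal C)$ and showing, via a coefficient analysis (Corollary \ref{indalpha}), that no point of $W_{\ell_1}(\mathcal C)$ can have $\alpha_{0,1}=\cdots=\alpha_{0,2\ell_2-2}=0$ and $\alpha_{0,2\ell_2-1}\neq 0$. Your invariant $\iota(Q)=\deg\gcd(N_Q)$, attached to the intrinsically defined $3$-dimensional space of first partials of a rank $3$ quadric, is a cleaner and more conceptual separator: it is generically equal to $d-2\ell$ on $W_\ell(\mathcal C)$ (base-point-free pencils form a dense open set, and the image of a dense open under the finite map $\widetilde{Q_\ell}$ is dense), and two dense subsets of a common irreducible variety must meet. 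Your part (3) is a correct self-contained computation (the leading monomials $z_kz_{m+2}$ of $\Theta(c_k c_m)$, $k\le m$, are pairwise distinct for the lex order, so $\Theta$ is an isomorphism onto $I(\mathcal C)_2$); the paper instead deduces (3) from the $\ell=1$ case of (2).

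Part (2) is where your proposal has a genuine gap. The reduction is sound: $I(\mathcal C)_2=\bigoplus_{j=1}^{\lfloor d/2\rfloor}R_{2d-4j}$ is multiplicity-free, the span of $W_\ell(\mathcal C)$ is $SL_2(\K)$-invariant, and nondegeneracy is therefore equivalent to the nonvanishing, for each $j$, of the $2j$-th transvectant expression $(f_0^2g,f_1^2g)_{2j}-(f_0f_1g,f_0f_1g)_{2j}$ for some admissible $(f_0,f_1,g)$. But that nonvanishing is the entire content of the statement, and you do not prove it: you reduce it to an ``elementary but not wholly routine'' alternating sum of products of binomial coefficients, observe that for $j$ near $\lfloor d/2\rfloor$ (the low-dimensional summands) monomial transvectants tend to vanish so the choice must be made ``carefully,'' and stop there. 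The sample identity you record for $d=2\ell$ is neither verified nor shown to be the right target for a uniform choice of monomials, so the argument cannot be certified as it stands. This is exactly the part of the theorem where the paper does its real work: the linear independence of the $\binom d2$ coefficients $\alpha_{i,j}$ is established through the basis $\Sigma$ of $\Delta_\ell$, the filtration of each $N_k$ by the subspaces $\langle\varSigma(i,k+1-i)\rangle$, and the explicit witness elements of $\varLambda(i,j+1)$ constructed in Propositions \ref{alphabetapro} and \ref{nondegterm}, which guarantee that each $\alpha_{i,j}$ has a term not expressible from the earlier pieces of the filtration. Your proposal contains no completed analogue of this step, so part (2) remains unproven.
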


The proof of Theorem \ref{thm:minimal and birational} is in Section 4.

As mentioned above, problem $(i)$ has already been solved for $n \geq 2$, but its proof does not apply to the case $n=1$. Also, problem $(iii)$ has been proven only for $\ell =1$. Our new approach in this paper for the case $n=1$ is to investigate the coefficients of the map $\widetilde{Q_{\ell}}$ with respect to the standard quadratic generators of $I(\mathcal{C})$. These coefficients are polynomials in several variables, but expressing them explicitly is too complex to be practical. Instead, we were able to solve the problems $(i)$ and $(iii)$ by verifying some useful properties of these polynomials.\\

Next, we consider problems $(ii)$ and $(iv)$.

\begin{theorem} \label{thm:Singularity of W_ell (X))}
Let $X=\nu_d (\P^n)$ be as above. Suppose that $d=2e+1$ or $d=2e$. Then

\renewcommand{\descriptionlabel}[1]%
             {\hspace{\labelsep}\textrm{#1}}
\begin{description}
\setlength{\labelwidth}{13mm} \setlength{\labelsep}{1.5mm}
\setlength{\itemindent}{0mm}

\item[{\rm (1)}] The morphisms $\widetilde{Q_{\ell}}$ in (\ref{eq:Q-map normalization}) is the normalization (and so the desingularization) morphism.
\item[{\rm (2)}] The morphisms $\widetilde{Q_1}$ and $\widetilde{Q_e}$ are injective.
\item[{\rm (3)}] If $2 \leq \ell \leq e-1$, then $W_{\ell} (X)$ is singular and
$$\dim \ {\rm Sing} (W_\ell (X)) \geq \theta (n,d,m)$$ where $\theta (n,d,m)$ is defined to be the following integer:
\begin{align*}
\max_{1 \leq m \leq min\{\ell, \lfloor \frac{d-2\ell}{2} \rfloor \}} \left\{2\binom{n+m}{n}+2\binom{n+\ell-m}{n}+\binom{n+d-2\ell-2m}{n}-7 \right\}.
\end{align*}
In particular, if $n=1$ and hence $\dim \ W_\ell (X) = d-2$ then
$$\dim \ {\rm Sing} (W_\ell (X)) \geq d-4.$$
\end{description}
\end{theorem}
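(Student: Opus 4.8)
The plan is to deduce all three parts from the explicit form of $\widetilde{Q_\ell}$ (cf. \cite{Park}): it sends $(\langle f,g\rangle,h)\in\mathbb{G}(1,\P^p)\times\P^q$ to $[Q]$ with $Q=(hf^2)(hg^2)-(hfg)^2\in I(X)_2$, a quadric of rank $3$ (it lies in $I(X)_2$ because $(hf^2)(hg^2)=(hfg)^2$ as a form of degree $2d$). Writing $P=\gcd(f,g)$, $f=Pf_0$, $g=Pg_0$ with $\gcd(f_0,g_0)=1$, this quadric depends only on the \emph{moving pencil} $\langle f_0,g_0\rangle$ and the \emph{cofactor} $E:=hP^2$: indeed $W:=\langle hf^2,hfg,hg^2\rangle=E\cdot\langle f_0^2,f_0g_0,g_0^2\rangle$ is the support of $Q$, so $E=\gcd(W)$, and $\langle f_0,g_0\rangle$ is read off the residual space $E^{-1}W$ together with the smooth conic $\{Q=0\}$; conversely $\widetilde{Q_\ell}^{-1}([Q])$ is exactly the set of factorizations $E=hP^2$ with $\deg P=\ell-\deg\langle f_0,g_0\rangle$ and $\deg h=d-2\ell$, the corresponding point being $(P\cdot\langle f_0,g_0\rangle,\ h)$. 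For part $(1)$: $\widetilde{Q_\ell}$ is finite by \eqref{eq:normalization} and birational onto $W_\ell(X)$, because on the dense open locus $\gcd(f,g)=1$ the only factorization is $E=h\cdot 1$, so the fibre is a single point — for $n\ge 2$ this is \cite[Corollary~3.4]{Park}, and for $n=1$ it follows from the analysis of the coefficients of $\widetilde{Q_\ell}$ developed for Theorem~\ref{thm:minimal and birational}; a finite birational morphism from the smooth (hence normal) variety $\mathbb{G}(1,\P^p)\times\P^q$ is the normalization of $W_\ell(X)$, and, being proper and birational from a smooth source, a desingularization.

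For part $(2)$ it suffices to show $\widetilde{Q_\ell}^{-1}([Q])$ is a single point when $\ell\in\{1,e\}$, i.e. that the factorization $E=hP^2$ is unique. For $\ell=1$ the pencil consists of linear forms, so $P$ is a constant, $E=h$, and the pencil is recovered as above; hence $\widetilde{Q_1}$ is injective. For $\ell=e$ (whether $d=2e$ or $d=2e+1$) one has $\deg h=d-2\ell\in\{0,1\}$, and since $E=hP^2$ the linear form $h$ is forced to be the product of the primes of $E$ occurring with odd multiplicity (the multiplicity of $h$ in $E$ is $1+2\,\mathrm{ord}_h(P)$); thus $h$, then $P=\sqrt{E/h}$, then $\langle f,g\rangle=P\cdot\langle f_0,g_0\rangle$ are all determined, so $\widetilde{Q_e}$ is injective.

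For part $(3)$, fix $m$ with $1\le m\le\ell-1$ and $d-2\ell-2m\ge 0$. Pick a pencil $\mathfrak p=\langle a,b\rangle$ of degree $\ell-m$ forms with $\gcd(a,b)=1$, two general forms $p_1,p_2$ of degree $m$ (so $\gcd(p_1,p_2)=1$ and $p_1\not\sim p_2$), and a form $h_0$ of degree $d-2\ell-2m$. Then $(\langle p_1a,p_1b\rangle,\,h_0p_2^2)$ and $(\langle p_2a,p_2b\rangle,\,h_0p_1^2)$ are distinct points of $\mathbb{G}(1,\P^p)\times\P^q$ with common moving pencil $\mathfrak p$ and common cofactor $E=h_0p_1^2p_2^2$, hence the same image $[Q]$; by part $(1)$, $[Q]$ is a non-normal, so singular, point of $W_\ell(X)$, and in particular $W_\ell(X)$ is singular for $2\le\ell\le e-1$. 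For general choices the fibre over $[Q]$ is exactly these two points, so $(\mathfrak p,p_1,p_2,h_0)\mapsto[Q]$ is generically finite and the family of such singular points has dimension
\[
\Big(2\binom{n+\ell-m}{n}-4\Big)+2\Big(\binom{n+m}{n}-1\Big)+\Big(\binom{n+d-2\ell-2m}{n}-1\Big)=\theta(n,d,m).
\]
Taking the maximum over the admissible $m$, and — when $4\ell\le d$, so that $m=\ell$ is admissible — also the value $m=\ell$, gives $\dim\mathrm{Sing}(W_\ell(X))\ge\theta(n,d,m)$; for $n=1$ the value $m=1$ already yields $\dim\mathrm{Sing}(W_\ell(X))\ge d-4$.

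The main obstacles I anticipate are: establishing the generic injectivity needed for part $(1)$ when $n=1$, which (as the introduction explains) cannot be checked by writing the coefficients of $\widetilde{Q_\ell}$ explicitly and must instead be extracted from structural properties of those polynomials; and, in part $(3)$, both the rigorous dimension bookkeeping (that the constructed families have exactly the stated dimension and generically finite fibres over $W_\ell(X)$) and the endpoint $m=\ell$, where the pencil $\mathfrak p$ degenerates so that the required singular points cannot come from multiple preimages but must be produced by a different mechanism — singular points of $W_\ell(X)$ with a single preimage, located where the differential of $\widetilde{Q_\ell}$ drops rank (which happens where $E$ acquires a degree $2\ell$ square factor lying in the pencil) — and this locus must be shown to have dimension $\theta(n,d,\ell)$.
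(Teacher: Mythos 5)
Your overall architecture coincides with the paper's: everything is reduced to a single characterization of the fibres of $\widetilde{Q_\ell}$ (the paper's Lemma~\ref{injlem}: $\widetilde{Q_\ell}^{-1}(q)$ has $\geq 2$ points iff $q=\widetilde{Q_\ell}(\langle Cf',Cg'\rangle,\langle D^2h'\rangle)$ with $C,D$ independent of degree $s\geq 1$), from which (1) follows by generic injectivity plus finiteness, (2) by the degree constraints at $\ell=1$ and $\ell=e$, and (3) by parametrizing the locus of multiple fibres exactly as you do (the paper's $\Sigma_m$, with the same dimension count giving $\theta$). Where you genuinely diverge is in \emph{how} the fibre characterization is established. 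The paper's Lemma~\ref{injlem} is proved by fixing a lexicographic monomial order, comparing leading terms in $\varphi(u^2w)\varphi(v^2w)-\varphi(uvw)^2=k\bigl(\varphi(f^2h)\varphi(g^2h)-\varphi(fgh)^2\bigr)$ to force $v^2w=g^2h$, and then running a chain of divisibility arguments in the UFD $R$. You instead argue structurally: the rank-$3$ quadric determines its support $W=\langle \varphi(hf^2),\varphi(hfg),\varphi(hg^2)\rangle$, whence $E=\gcd(W)=hP^2$ and the residual data recover the moving pencil, so the fibre is the set of factorizations $E=hP^2$. This is a legitimate and arguably more conceptual route, but the sentence ``$\langle f_0,g_0\rangle$ is read off... together with the smooth conic $\{Q=0\}$'' hides the one nontrivial step: you must show that the locus $\{[Eu^2]: u\in\langle f_0,g_0\rangle\}\subset\P(W)$ is intrinsic to $Q$ (it is the dual conic of $Q|_W$), and then extract the pencil by taking square roots in the UFD $R$ after dividing by $E$; without this, two preimages could a priori have different moving pencils, and your ``exactly the set of factorizations'' claim -- which carries the whole weight of parts (1)--(3) -- is unproved. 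Once that is written out, your proof is complete and is a genuine alternative to the paper's leading-term computation.

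Two smaller remarks. First, your worry about the endpoint $m=\ell$ is well taken but applies verbatim to the paper's own proof (for $m=\ell$ the space $\langle Cf',Cg'\rangle$ with $f',g'\in R_0$ is not a pencil and $\G(1,\P(R_0))$ is empty); in every case the maximum defining $\theta$ is attained at some $m<\ell$ (for $n=1$ at $m=1$, giving $d-4$), so neither proof actually needs a separate mechanism there. Second, in part (3) you should justify that $(\mathfrak p,p_1,p_2,h_0)\mapsto(\langle p_1a,p_1b\rangle,h_0p_2^2)$ is (generically) finite -- in a UFD the divisors of fixed degree of a fixed form are finite up to scalar -- which is exactly how the paper secures the dimension count.
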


The proof of Theorem \ref{thm:Singularity of W_ell (X))} is in Section $5$.

As mentioned earlier, problem $(ii)$ has been solved for $n \geq 2$, but its proof does not apply to the case $n=1$. Along this line, we will provide a proof of problem $(ii)$ that applies for all $n \geq 1$. Since problem $(ii)$ is completely proven in Theorem \ref{thm:Singularity of W_ell (X))}.$(1)$, we could almost solve problem $(iv)$ by investigating the locus where the birational morphism $\widetilde{Q_{\ell}}$ fails to be one-to-one.\\

\noindent {\bf Organization of the paper.}
In Section $2$ we set up notation for the Grassmannian varieties and discuss their quadratic generators. Section $3$ is devoted to establishing some useful facts about the $\widetilde{Q_\ell}$-maps for the rational normal curve. In Section $4$ and $5$, we prove Theorem \ref{thm:minimal and birational} and Theorem \ref{thm:Singularity of W_ell (X))}, our main theorems regarding the geometric structure of the locus $\Phi_3 (X)$ of rank $3$ quadratic equations of the Veronese variety $X = \nu_d (\P^n)$. Finally, in Section $6$, we study the problem of determining whether the map $\widetilde{Q_e}$ is an isomorphism in a few examples.  \\

\noindent {\bf Acknowledgement.} This work was supported by the National Research Foundation of Korea(NRF) grant funded by the Korea government(MSIT) (No. 2022R1A2C1002784).

\section{Preliminaries}
\noindent We denote by $\mathbb{G} (1,\P^n )$ the Grassmannian manifold of lines in $\P^n$. Also let
$$Z \subset \mathbb{P}^N , N=\binom{n+1}{2}-1$$
be the Plücker embedding of $\mathbb{G} (1,\P^n )$. In this section we recall the standard expression of the homogeneous ideal and the homogeneous coordinate ring of $Z$, which will be used to understand the finite morphisms $\widetilde{Q_{\ell}} : \mathbb{G} (1,\P^p ) \times \P^q \rightarrow W_{\ell} (X)$ in (\ref{eq:normalization}). We refer the reader to \cite{Mu} for details.

\begin{notation and remark}\label{not and rmk:section2}
Let $R = \K [A_0 , A_1 , \ldots , A_n , B_0 , B_1 , \ldots , B_n ]$ be the polynomial ring in $2n+2$ variables over $\K$, and let
\[
M=
\begin{pmatrix}
A_0 & A_1 & \cdots & A_\alpha & \cdots & A_\beta & \cdots & A_n \\
B_0 & B_1 & \cdots & B_\alpha & \cdots & B_\beta & \cdots & B_n
\end{pmatrix} .
\]

\renewcommand{\descriptionlabel}[1]%
             {\hspace{\labelsep}\textrm{#1}}
\begin{description}
\setlength{\labelwidth}{13mm} \setlength{\labelsep}{1.5mm}
\setlength{\itemindent}{0mm}

\item[{\rm (1)}] The homogeneous coordinate ring of $Z$ is isomorphic to the subring
\begin{align*}
T = \mathbb{K}\left[\left\{q_{\alpha \beta} \mid 0 \leq \alpha < \beta \leq n \right\}\right]
\end{align*}
of $R$ where $q_{\alpha \beta}$ denotes the $2$-minor $A_{\alpha} B_{\beta} - A_{\beta} B_{\alpha}$ of $M$.

\item[{\rm (2)}] The $\K$-vector space $T_2$ is spanned by the subset $\mathcal{Q} := \mathcal{Q}_2 \cup \mathcal{Q}_3 \cup \mathcal{Q}_4$ where
\begin{equation}
\mathcal{Q}_s := \left\{q_{\alpha \beta} q_{\gamma \delta} \mid q_{\alpha \beta}, q_{\gamma \delta} \in T_1 \ \mbox{and} \  \#\{\alpha, \beta, \gamma, \delta\} =s  \right\}
\end{equation}
for $s=2,3,4$. Obviously,
$$\mathcal{Q}_2 = \{ q_{\alpha \beta} ^2 \mid 0 \le \alpha < \beta \le n \},$$
$$\mathcal{Q}_3 = \bigcup_{0 \leq \alpha < \beta < \gamma  \le n} \{ q_{\alpha \beta} q_{\alpha \gamma} ,  q_{\alpha \beta} q_{\beta \gamma} , q_{\alpha \gamma} q_{\beta \gamma}   \}$$
and
$$\mathcal{Q}_4 = \bigcup_{0 \leq \alpha < \beta < \gamma < \delta \le n} \{ q_{\alpha \beta} q_{\gamma \delta} , q_{\alpha \gamma} q_{\beta \delta} ,  q_{\alpha \delta} q_{\beta \gamma}  \}.$$
In addition, $\mathcal{Q}$ contains a basis for $T_2$.
\end{description}
\end{notation and remark}

\begin{theorem}[Theorem 8.15 in \cite{Mu}]\label{thm:Grass Plucker relations}
Let $Z \subset \P^N$ be as above, and let
$$\{ p_{\alpha \beta} ~\mid~ 0 \leq \alpha < \beta \leq n \}$$
be the set of homogeneous coordinates of $\mathbb{P}^N$. Then the homogeneous ideal of $Z$ is minimally generated by the set
\begin{align} \label{plurel}
\{ p_{\alpha \beta} p_{\gamma \delta} - p_{\alpha \gamma} p_{\beta \delta} + p_{\alpha \delta} p_{\beta \gamma} ~|~0 \leq \alpha<\beta<\gamma<\delta \leq n \}
\end{align}
of the ${n+1} \choose {4}$ Pl$\ddot{u}$cker relations.
\end{theorem}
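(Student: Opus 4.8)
The plan is to prove three things in turn: that every Plücker relation vanishes on $Z$, that the ideal $J$ generated by them is already the whole homogeneous ideal $I(Z)$, and that these $\binom{n+1}{4}$ quadrics are $\mathbb{K}$-linearly independent, which together with $J=I(Z)$ forces them to be a minimal generating set. For the first point I would use the description of the homogeneous coordinate ring of $Z$ as the subring $T\subset R$ from Notation and Remark \ref{not and rmk:section2}, under which $p_{\alpha\beta}$ corresponds to $q_{\alpha\beta}=A_\alpha B_\beta-A_\beta B_\alpha$; a direct expansion gives the three-term identity
\[
q_{\alpha\beta}q_{\gamma\delta}-q_{\alpha\gamma}q_{\beta\delta}+q_{\alpha\delta}q_{\beta\gamma}=0 \quad\text{in } R
\]
for all $\alpha<\beta<\gamma<\delta$, so each Plücker relation lies in $\ker(S\twoheadrightarrow T)=I(Z)$ and hence $J\subseteq I(Z)$.

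The crux is to show $J=I(Z)$, for which I would argue on the standard affine charts $U_{\alpha\beta}=\{p_{\alpha\beta}\neq0\}\cong\mathbb{A}^N$, which cover $\mathbb{P}^N$. Fixing a pair and setting $p_{\alpha\beta}=1$, each two-element subset $\{\gamma,\delta\}$ disjoint from $\{\alpha,\beta\}$ gives a Plücker relation of the form $\pm p_{\gamma\delta}=(\text{a quadratic polynomial in the }2(n-1)\text{ coordinates }p_{\alpha j},p_{\beta j})$, so on $V(J)\cap U_{\alpha\beta}$ all the coordinates $p_{\gamma\delta}$ get eliminated; substituting these expressions into the remaining Plücker relations, one checks --- again by the three-term identity, now for the $2\times(n-1)$ array of the $p_{\alpha j},p_{\beta j}$ --- that they become trivial. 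Hence $V(J)\cap U_{\alpha\beta}\cong\mathbb{A}^{2(n-1)}$ for every pair, so $V(J)$ is a reduced scheme covered by smooth irreducible charts of dimension $2(n-1)=\dim Z$. Since $Z\subseteq V(J)$ by step one, $Z$ is connected, and each chart $V(J)\cap U_{\alpha\beta}$ meets $Z$ (clearly $Z\cap U_{\alpha\beta}\neq\emptyset$), it follows that $V(J)$ is connected, hence --- being smooth --- irreducible; as it contains $Z$ and has the same dimension, $V(J)=Z$, and being reduced, $J=I(V(J))=I(Z)$.

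For minimality, note first that the $q_{\alpha\beta}$ are $\mathbb{K}$-linearly independent in $R_2$, so $\dim_{\mathbb{K}}T_1=\binom{n+1}{2}=\dim_{\mathbb{K}}S_1$, i.e. $I(Z)_1=0$; therefore $I(Z)$ is generated in degrees $\geq2$, and since $J=I(Z)$ is generated in degree $2$ the graded Nakayama lemma shows every minimal generating set consists of exactly $\dim_{\mathbb{K}}I(Z)_2$ quadrics. Finally, the Plücker relations are linearly independent: in the relation indexed by $\alpha<\beta<\gamma<\delta$ the monomial $p_{\alpha\beta}p_{\gamma\delta}$ appears, and it appears in no other Plücker relation because the only relation whose three monomials use exactly the indices $\{\alpha,\beta,\gamma,\delta\}$ is that one. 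Hence $\dim_{\mathbb{K}}I(Z)_2=\binom{n+1}{4}$ and the Plücker relations are a minimal generating set.

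The one genuinely delicate point is the bookkeeping in the second step --- verifying that, after eliminating the off-chart coordinates on $U_{\alpha\beta}$, the remaining Plücker relations collapse to identities. An alternative that avoids this is Hodge's straightening law: order the $p_{\alpha\beta}$, use the Plücker relations to rewrite any monomial as a $\mathbb{Z}$-combination of standard monomials, so that $\dim_{\mathbb{K}}(S/J)_m$ is at most the number of standard monomials of degree $m$; combined with the known linear independence of the standard monomials in $T$, this gives $S/J\xrightarrow{\ \sim\ }T$, i.e. $J=I(Z)$, and simultaneously recovers the Hilbert function used implicitly in part (2) of Notation and Remark \ref{not and rmk:section2}.
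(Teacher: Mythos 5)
The paper offers no proof of this statement at all: it is quoted as Theorem 8.15 of Mukai's book, so there is no in-paper argument to compare yours against, and I can only assess your proposal on its own terms. Your first and third steps are correct and complete: the three-term identity among the $q_{\alpha\beta}$ gives $J\subseteq I(Z)$, and since the three monomials of the relation attached to $\{\alpha<\beta<\gamma<\delta\}$ determine that four-element set, distinct Pl\"ucker relations have disjoint monomial supports and are therefore linearly independent; together with $I(Z)_1=0$ and graded Nakayama this yields minimality once $J=I(Z)$ is established. The chart computation you defer (that after eliminating the $p_{\gamma\delta}$ with $\{\gamma,\delta\}\cap\{\alpha,\beta\}=\emptyset$ the remaining relations collapse) is genuine work but standard and does go through.

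The genuine gap is the passage from the chart analysis to $J=I(Z)$. What the charts give you is that $\operatorname{Proj}(S/J)$ is a reduced scheme equal to $Z$; by the projective Nullstellensatz this yields $\sqrt{J}=I(Z)$, equivalently $J^{\mathrm{sat}}=I(Z)$, but not $J=I(Z)$. Reducedness of the Proj does not imply that the defining ideal is radical or saturated: for $J=(x^2,xy)\subset\mathbb{K}[x,y]$ the Proj is a reduced point while $J\neq I(V(J))=(x)$. The distinction is exactly what is at stake here, because if $J$ were a proper subideal of its saturation the Pl\"ucker relations would fail to generate $I(Z)$, i.e.\ the theorem as you have argued it would not follow. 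So the main line of your proof does not close. The alternative you mention --- Hodge's straightening law, showing the standard monomials span $S/J$ while being linearly independent in $T$, whence $S/J\cong T$ and $J=\ker(S\twoheadrightarrow T)=I(Z)$ --- is the correct fix and is essentially the classical proof behind Mukai's Theorem 8.15; it should be promoted from a closing remark to the actual argument. Short of that, you would need an independent reason for $J$ to be saturated (for instance a degreewise count $\dim_{\mathbb{K}}(S/J)_m\leq\dim_{\mathbb{K}}T_m$, which is again what straightening provides).
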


Now, we can obtain a basis of the $\K$-vector space $T_2$ by combining Notations and Remark \ref{not and rmk:section2} and Theorem \ref{thm:Grass Plucker relations}.

\begin{proposition} \label{basisplu}
The set $\mathcal{B} := \left\{q_{\alpha \beta} q_{\gamma \delta} \mid q_{\alpha \beta} , q_{\gamma \delta}  \in T_1 \ \mbox{and} \ \alpha \leq \gamma \leq \beta, \delta  \right\}$ is a basis of $T_2$.
\end{proposition}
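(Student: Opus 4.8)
The plan is to deduce ``$\mathcal{B}$ is a basis of $T_2$'' from the two facts ``$\mathcal{B}$ spans $T_2$'' and ``$\#\mathcal{B}=\dim_{\K}T_2$'', using the elementary observation that a spanning set of cardinality equal to the dimension is automatically a basis. Conceptually $\mathcal{B}$ is the degree-two piece of a standard-monomial basis (in the sense of Hodge's straightening law) for the ordering $\alpha\le\gamma\le\beta$ on index pairs, and the argument below is just a hands-on version of straightening in this single degree, powered by the Pl\"ucker relations of Theorem~\ref{thm:Grass Plucker relations}.

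For the spanning assertion, I would first check directly against the explicit lists in part~(2) of Notation and Remark~\ref{not and rmk:section2} that $\mathcal{Q}_2\cup\mathcal{Q}_3\subseteq\mathcal{B}$: for each of $q_{\alpha\beta}^2$, $q_{\alpha\beta}q_{\alpha\gamma}$, $q_{\alpha\beta}q_{\beta\gamma}$, $q_{\alpha\gamma}q_{\beta\gamma}$ one of the two orderings of the two index pairs satisfies $\alpha\le\gamma\le\beta$. For a quadruple $\alpha<\beta<\gamma<\delta$ the same check gives $q_{\alpha\gamma}q_{\beta\delta},\,q_{\alpha\delta}q_{\beta\gamma}\in\mathcal{B}$ but $q_{\alpha\beta}q_{\gamma\delta}\notin\mathcal{B}$. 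Since $\mathcal{Q}=\mathcal{Q}_2\cup\mathcal{Q}_3\cup\mathcal{Q}_4$ spans $T_2$ by part~(2) of Notation and Remark~\ref{not and rmk:section2}, it then remains only to express each remaining generator $q_{\alpha\beta}q_{\gamma\delta}$ with $\alpha<\beta<\gamma<\delta$ in terms of $\mathcal{B}$, and this is exactly the Pl\"ucker relation $q_{\alpha\beta}q_{\gamma\delta}=q_{\alpha\gamma}q_{\beta\delta}-q_{\alpha\delta}q_{\beta\gamma}$, whose right-hand side lies in $\operatorname{span}\mathcal{B}$. Thus $\mathcal{B}$ spans $T_2$, and counting the three disjoint pieces $\mathcal{Q}_2$, $\mathcal{Q}_3$, $\mathcal{Q}_4\cap\mathcal{B}$ gives $\#\mathcal{B}=\binom{n+1}{2}+3\binom{n+1}{3}+2\binom{n+1}{4}$.

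For the dimension, I would use the presentation $\K[p_{\alpha\beta}\mid 0\le\alpha<\beta\le n]\twoheadrightarrow T$, $p_{\alpha\beta}\mapsto q_{\alpha\beta}$, whose kernel is $I(Z)$. In degree two this map carries the monomial basis $\{p_{\alpha\beta}p_{\gamma\delta}\}$ bijectively onto $\mathcal{Q}$ --- classifying the unordered pairs of $2$-subsets by the cardinality of their intersection ($2$, $1$, or $0$) recovers $\mathcal{Q}_2$, $\mathcal{Q}_3$, $\mathcal{Q}_4$ respectively --- so the degree-two part of the source has dimension $\#\mathcal{Q}=\binom{n+1}{2}+3\binom{n+1}{3}+3\binom{n+1}{4}$, while $I(Z)_2$ has dimension $\binom{n+1}{4}$: by Theorem~\ref{thm:Grass Plucker relations} a minimal generating set of $I(Z)$ consists entirely of quadrics, and such a set is $\K$-linearly independent and spans $I(Z)_2$ (a linear dependence would make one generator redundant, and $I(Z)$ has no elements of lower degree). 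Subtracting gives $\dim_{\K}T_2=\binom{n+1}{2}+3\binom{n+1}{3}+2\binom{n+1}{4}=\#\mathcal{B}$, so $\mathcal{B}$ is a basis. I do not expect a genuine obstacle here: the statement is essentially Hodge straightening in degree two, and the only thing needing care is to keep the two bijections --- ``monomial basis $\leftrightarrow\mathcal{Q}$'' and the description of $\mathcal{Q}_4\cap\mathcal{B}$ --- mutually consistent, since a single mislabelled index would corrupt the count.
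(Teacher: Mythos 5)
Your proof is correct and follows essentially the same route as the paper: both establish spanning by observing $\mathcal{Q}_2\cup\mathcal{Q}_3\subseteq\mathcal{B}$ and straightening $q_{\alpha\beta}q_{\gamma\delta}$ (for $\alpha<\beta<\gamma<\delta$) via the Pl\"ucker relation, then conclude by comparing $\#\mathcal{B}$ with $\dim_{\K}T_2$. The only cosmetic difference is that you justify $\dim_{\K}I(Z)_2=\binom{n+1}{4}$ via minimal generation and count monomials by intersection pattern, whereas the paper invokes the arithmetically Cohen--Macaulay property of $Z$ and computes $\binom{N+1}{2}-\binom{n+1}{4}$ directly; the resulting numbers agree.
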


\begin{proof}
Obviously, $\mathcal{B}$ is a subset of $\mathcal{Q}$ which contains $\mathcal{Q}_2$ and $\mathcal{Q}_3$. Moreover,
$$\mathcal{B} \cap \mathcal{Q}_4 = \bigcup_{0 \leq \alpha < \beta < \gamma < \delta \le n} \{  q_{\alpha \gamma} q_{\beta \delta} ,  q_{\alpha \delta} q_{\beta \gamma}  \}.$$
Also, if $0 \leq \alpha < \beta < \gamma < \delta \le n$ then the element $q_{\alpha \beta} q_{\gamma \delta}$ of $\mathcal{Q}$ can be written as
$$q_{\alpha \beta} q_{\gamma \delta} = q_{\alpha \gamma} q_{\beta \delta} - q_{\alpha \delta} q_{\beta \gamma}$$
by Theorem \ref{thm:Grass Plucker relations}. In consequence, it follows that $\mathcal{B}$ generates $T_2$ since so does $\mathcal{Q}$. On the other hand, the dimension of the $\K$-vector space $T_2$ is given as follows since $Z$ is arithmetically Cohen-Macaulay:
\begin{align*}
\dim_{\K}~ T_2 &= h^0 (\P^N , \mathcal{O}_{\P^N} (2)) - h^0 (Z , \mathcal{O}_{Z} (2)) \\
               &= {{N+1} \choose 2} - \binom{n+1}{4}  =  \binom{\binom{n+1}{2}+1}{2} - \binom{n+1}{4} = \frac{1}{3} \binom{n+2}{2} \binom{n+1}{2}
\end{align*}
Also, we have
\begin{align*}
|\mathcal{B}| & \leq |\mathcal{B}_2| +  |\mathcal{B}_3| +  |\mathcal{B} \cap \mathcal{Q}_4 | \\
              &\leq {{n+1} \choose {2}} + 3 \times {{n+1} \choose {3}} + 2 \times {{n+1} \choose {4}} =  \frac{1}{3} \binom{n+2}{2}  \binom{n+1}{2}.
\end{align*}
This completes the proof that $\mathcal{B}$ is a basis of $T_2$.
\end{proof}

\section{The $\widetilde{Q_{\ell}}$-maps of rational normal curves}
\noindent Throughout this section, let $\mathcal{C} \subset \mathbb{P}^d$, $d \geq 2$, be the standard rational normal curve and let $S = \K [z_0 , z_1 , \ldots , z_d ]$ denote the homogeneous coordinate ring of $\P^d$. Thus
$$\mathcal{C} = \{ [x^d:\cdots:x^{d-i} y^i :\cdots:y^d] ~|~ [x:y] \in \mathbb{P}^1 \}$$
and the homogeneous ideal $I(\mathcal{C})$ of $\mathcal{C}$ is minimally generated by the set
\begin{align} \label{quadbasis}
\{  Q_{i, j} := z_{i} z_{j+1} - z_{i+1} z_{j} \mid 0 \leq i < j \leq d-1  \}
\end{align}
of ${d \choose 2}$ quadratic polynomials.
The main purpose of this section is to make some preparations for the proof of Theorem \ref{thm:minimal and birational}.
We first establish a few notations that we will use throughout the rest of this paper.

\begin{definition and remark}\label{definition of Q map}
$(1)$ Let $\varphi : \K [x,y]_d \rightarrow S_1$ be the isomorphism of $\K$-vector spaces such that $\varphi (x^{d-i} y^i ) = z_i$ for all $0 \leq i \leq d$.
For each integer $1 \leq \ell \leq d/2$, we define the map
$$Q_{\ell} : \K [x,y]_{\ell} \times \K [x,y]_{\ell} \times \K [x,y]_{d-2\ell} \rightarrow I(\mathcal{C})_2$$
by
\begin{equation*}
Q_{\ell} (f,g,h) = \varphi (f^2 h) \varphi (g^2 h) - \varphi (fgh)^2 .
\end{equation*}
Also we denote the image of $Q_{\ell}$ in $\P (I(\mathcal{C})_2 )$ by $W_{\ell} (\mathcal{C})$. That is,
\begin{equation*}
W_{\ell} (\mathcal{C}) := \{ [Q_{\ell} (f,g,h)]~|~(f,g,h) \in \K [x,y]_{\ell} \times \K [x,y]_{\ell} \times \K [x,y]_{d-2\ell} ,~Q_{\ell} (f,g,h) \neq 0 \}.
\end{equation*}
The map $Q_{\ell}$ is well-defined since the restriction of the quadratic polynomial $Q_{\ell} (f,g,h)$ to $\mathcal{C}$ becomes
\begin{equation*}
(f^2 h) \times (g^2 h) - (fg h)^2  = 0
\end{equation*}
(cf. \cite[Proposition 6.10]{E}). Thus $W_{\ell} (\mathcal{C})$ is contained in $\Phi_3 (\mathcal{C})$.

\noindent $(2)$ If we write the three polynomials $f,g \in \K [x,y]_{\ell}$ and $h \in \K [x,y]_{d-2\ell}$ as
\begin{align*}
f &=A_0 x^{\ell} + A_1 x^{\ell-1}y + \cdots + A_{\ell} y^{\ell} \\
g &=B_0 x^{\ell} + B_1 x^{\ell-1}y + \cdots + B_{\ell} y^{\ell} \\
h &=C_0 x^{d-2\ell} + C_1 x^{d-2\ell-1}y + \cdots + C_{d-2\ell}y^{d-2\ell}
\end{align*}
for some $A_i, B_j, C_k \in \mathbb{K}$, then $Q_{\ell} (f,g,h)$ can be expressed in two different ways as
\begin{equation} \label{alphapha}
Q_{\ell} (f,g,h) = \sum_{0 \leq i < j \leq d-1}  \alpha_{i,j} Q_{i,j}
\end{equation}
and
\begin{equation*}
Q_{\ell} (f,g,h) = \sum_{0 \leq s \leq t \leq d}  \beta_{s,t} z_s z_t
\end{equation*}
where $\alpha_{i,j}$'s and $\beta_{s,t}$'s are polynomials in $A_0 , \ldots , A_{\ell} , B_0 , \ldots , B_{\ell} , C_0 , \ldots , C_{d-2\ell}$. For convention, we define $\alpha_{i, j}=0$ if $i<0$ or $j \geq d$ or $i \geq j$.

\noindent $(3)$ By \cite[Theorem 1.2]{Park}, the map $Q_{\ell}$ induces a finite morphism
\begin{align*}
\widetilde{Q_{\ell}} : \mathbb{G}(1, \mathbb{P}^{\ell}) \times \mathbb{P}^{d-2\ell} \rightarrow \P \left( I(\mathcal{C} )_2 \right)
\end{align*}
such that
\begin{equation*}
\left( \widetilde{Q_{\ell}} \right)^* (\mathcal{O}_{\P \left( I(\mathcal{C} )_2 \right)} (1) ) = \mathcal{O}_{\mathbb{G} (1,\P^{\ell} ) \times \P^{d-2\ell} } (2,2).
\end{equation*}
In particular, $W_{\ell} (\mathcal{C})$ is a projective variety of dimension $d-2$. Also, all $\alpha_{i, j}$'s are elements of the $\mathbb{K}$-vector space
$$\Delta_{\ell} := H^0 (\mathbb{G}(1, \mathbb{P}^{\ell}) \times \mathbb{P}^{d-2\ell} , \mathcal{O}_{\mathbb{G} (1,\P^{\ell} ) \times \P^{d-2\ell} } (2,2)).$$
Moreover, $\{ \alpha_{i, j} \mid 0 \leq i < j \leq d-1 \}$ is a base point free linear series on the projective variety $\mathbb{G}(1, \mathbb{P}^{\ell}) \times \mathbb{P}^{d-2\ell}$, which defines the finite morphism
$$\widetilde{Q_{\ell}} : \mathbb{G}(1, \mathbb{P}^{\ell}) \times \mathbb{P}^{d-2\ell} \rightarrow \P \left( I(\mathcal{C} )_2 \right) = \P^{{d \choose 2}-1} .$$

\noindent $(4)$ By the Künneth formula, it holds that
$$\Delta_{\ell} \cong \mathbb{K} [ \{ q_{\alpha \beta} ~\mid~ 0 \leq \alpha < \beta \leq \ell \} ]_2 \otimes \mathbb{K}[C_0, \dots, C_{d-2\ell}]_2 .$$
For $0 \le \alpha , \beta , \gamma ,\delta \leq \ell$ and $0 \leq \lambda, \mu \leq d-2\ell$, we define $D(\alpha, \beta, \gamma, \delta, \lambda, \mu)$ as
$$D(\alpha, \beta, \gamma, \delta, \lambda, \mu) = C_{\lambda} C_{\mu} q_{\alpha \beta} q_{\gamma \delta} = C_{\lambda} C_{\mu} (A_{\alpha} B_{\beta} - A_{\beta} B_{\alpha} ) (A_{\gamma} B_{\delta} -A_{\delta} B_{\gamma} ).$$
Then $\Delta_{\ell}$ is spanned by the subset
$$\Sigma^{\star} := \{ D(\alpha, \beta, \gamma, \delta, \lambda, \mu) ~|~ 0 \leq \alpha,\beta,\gamma,\delta \leq \ell ,~0 \leq \lambda, \mu \leq d-2\ell \}.$$
Also, Proposition \ref{basisplu} shows that the subset
$$\Sigma := \{ D(\alpha, \beta, \gamma, \delta, \lambda, \mu) ~|~ 0 \leq \alpha \leq \gamma \leq \beta, \delta \leq \ell ,~\alpha \neq \beta,~\gamma \neq \delta,~0 \leq \lambda \leq \mu \leq d-2\ell \}$$
of $\Sigma^{\star}$ is a basis for $\Delta_{\ell}$.
\end{definition and remark}

From now on, we will study a few basic properties of the subset $\{ \alpha_{i, j} \mid 0 \leq i < j \leq d-1 \}$ of $\Delta_{\ell}$ which consists of the coefficients of the map $Q_{\ell}$.

\begin{lemma}\label{lem:basic coefficients}
Let $\alpha_{i, j}$'s and $\beta_{s,t}$'s be as in Definition and Remark \ref{definition of Q map}.$(2)$.
Then:

\renewcommand{\descriptionlabel}[1]%
             {\hspace{\labelsep}\textrm{#1}}
\begin{description}
\setlength{\labelwidth}{13mm}
\setlength{\labelsep}{1.5mm}
\setlength{\itemindent}{0mm}

\item[$(1)$] For any $0 \leq s < t \leq d-1$, it holds that
$$\beta_{s, t}=-\alpha_{s-1, t} + \alpha_{s, t-1} ;$$

\item[$(2)$] If $i+j \leq d-1$, then
$$\alpha_{i,j} = \sum_{k=0}^{i} \beta_{k, i+j+1-k} ;$$

\item[$(3)$] If $i+j \geq d$, then
$$\alpha_{i,j} = \sum_{k=j+1}^{d} \beta_{i+j+1-k,k}.$$
\end{description}
\end{lemma}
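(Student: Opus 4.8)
The plan is to establish $(1)$ by a direct expansion of $\sum_{i<j}\alpha_{i,j}Q_{i,j}$ and comparison of coefficients, and then to deduce $(2)$ and $(3)$ from $(1)$ by a telescoping argument along the anti-diagonals $i+j=\mathrm{const}$.

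Before starting, I would note that since $I(\mathcal{C})$ is minimally generated by the $\binom{d}{2}$ quadrics $Q_{i,j}$ while $\dim_{\K} I(\mathcal{C})_2=\binom{d}{2}$, the set $\{Q_{i,j}\mid 0\le i<j\le d-1\}$ is a $\K$-basis of $I(\mathcal{C})_2$; hence the $\alpha_{i,j}$ are uniquely determined by $(f,g,h)$, just as the $\beta_{s,t}$ are uniquely determined because $\{z_sz_t\mid 0\le s\le t\le d\}$ is a basis of $S_2$. So all three identities are genuine identities between well-defined polynomials in $A_0,\dots,A_\ell,B_0,\dots,B_\ell,C_0,\dots,C_{d-2\ell}$.

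For $(1)$, I would expand
\[
\sum_{0\le i<j\le d-1}\alpha_{i,j}\,Q_{i,j}=\sum_{0\le i<j\le d-1}\alpha_{i,j}\,(z_iz_{j+1}-z_{i+1}z_j),
\]
re-index the monomials of the first family by $(s,t)=(i,j+1)$ and those of the second by $(s,t)=(i+1,j)$, and collect the coefficient of $z_sz_t$ for $0\le s<t\le d$. The first family then contributes $\alpha_{s,t-1}$ and the second $-\alpha_{s-1,t}$, where the conventions of Definition and Remark \ref{definition of Q map}.$(2)$ (namely $\alpha_{s,t-1}=0$ when $s\ge t-1$, and $\alpha_{s-1,t}=0$ when $s=0$ or $t\ge d$) exactly account for the index ranges falling outside $\{0\le i<j\le d-1\}$. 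This gives $\beta_{s,t}=-\alpha_{s-1,t}+\alpha_{s,t-1}$ for all $0\le s<t\le d$, in particular for $t\le d-1$. The only care needed is to check that the conventions kick in correctly at the boundary values $s=0$, $t=d-1$, $t=d$ and on the diagonal $i=j$; everything else is bookkeeping.

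For $(2)$ and $(3)$, fix $m=i+j$ and substitute the formula of $(1)$ into the claimed sums. If $m\le d-1$, then for $0\le k\le i$ one has $\beta_{k,m+1-k}=\alpha_{k,m-k}-\alpha_{k-1,m-k+1}$; writing $a_k:=\alpha_{k,m-k}$ and noting $\alpha_{k-1,m-k+1}=a_{k-1}$, the sum $\sum_{k=0}^{i}\beta_{k,m+1-k}$ telescopes to $a_i-a_{-1}=\alpha_{i,j}-\alpha_{-1,m+1}=\alpha_{i,j}$ by the convention. If $m\ge d$, then for $j+1\le k\le d$ one has $\beta_{m+1-k,k}=\alpha_{m+1-k,k-1}-\alpha_{m-k,k}$; writing $b_k:=\alpha_{m-k,k}$ and noting $\alpha_{m+1-k,k-1}=b_{k-1}$, the sum $\sum_{k=j+1}^{d}\beta_{m+1-k,k}$ telescopes to $b_j-b_d=\alpha_{i,j}-\alpha_{m-d,d}=\alpha_{i,j}$, again by the convention (here $j=d\ge d$). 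The step that genuinely needs attention — and the only mildly delicate point of the whole argument — is verifying that every $\beta$ occurring in these sums really is a coefficient $\beta_{s,t}$ with $0\le s<t\le d$, so that $(1)$ applies to it; this follows from $i<j$ together with the case hypothesis $m\le d-1$ (resp. $m\ge d$), which forces the first index to stay strictly below the second and within range.
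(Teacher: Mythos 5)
Your proposal is correct and follows essentially the same route as the paper: part $(1)$ by identifying the two generators $Q_{s-1,t}$ and $Q_{s,t-1}$ in which $z_sz_t$ occurs (your re-indexing is the same computation), and parts $(2)$ and $(3)$ by telescoping the identity of $(1)$ along an anti-diagonal, with the boundary conventions on $\alpha_{i,j}$ killing the extra term. Your added remarks on the uniqueness of the $\alpha_{i,j}$ and on the index ranges where $(1)$ applies are sound but do not change the argument.
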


\begin{proof}
$(1)$ The equality comes immediately by comparing the following two expressions of $Q_{\ell} (f,g,h)$:
$$Q_{\ell} (f,g,h) = \sum_{0 \leq i < j \leq d-1} \alpha_{i, j} \thickspace Q_{i, j} = \sum_{0 \leq s \leq t \leq d} \beta_{s, t} \thickspace z_s z_t $$
Indeed, $z_s z_t$ appears only in $Q_{s-1, t} = z_{s-1} z_{t+1} - z_s z_t$ and $Q_{s, t-1} = z_s z_t - z_{s+1} z_{t-1}$.

\noindent $(2)$ The assertion comes immediately from the following equalities:
\begin{align} \label{betalpha1}
\begin{cases}
\beta_{0, i+j+1} &= \alpha_{0, i+j} \\
\beta_{1, i+j} &= -\alpha_{0, i+j} + \alpha_{1, i+j-1} \\
\beta_{2, i+j-1} &= -\alpha_{1, i+j-1} + \alpha_{2, i+j-2} \\
&\vdots \\
\beta_{i-1, j+2} &= -\alpha_{i-2, j+2} + \alpha_{i-1, j+1} \\
\beta_{i, j+1} &= -\alpha_{i-1, j+1} + \alpha_{i,j}
\end{cases}
\end{align}

\noindent $(3)$ The assertion comes immediately from the following equalities:
\begin{align} \label{betalpha2}
\begin{cases}
\beta_{i+j+1-d, d} &= \alpha_{i+j+1-d, d-1} \\
\beta_{i+j+2-d, d-1} &= -\alpha_{i+j+1-d, d-1} + \alpha_{i+j+2-d, d-2} \\
\beta_{i+j+3-d, d-2} &= -\alpha_{i+j+2-d, d-2} + \alpha_{i+j+3-d, d-3} \\
&\vdots \\
\beta_{i-1, j+2} &= -\alpha_{i-2, j+2} + \alpha_{i-1, j+1} \\
\beta_{i, j+1}   &= -\alpha_{i-1, j+1} + \alpha_{i, j}
\end{cases}
\end{align}
\end{proof}

\begin{proposition} \label{alphabetapro}
For any $s$ and $t$ with $0 \leq s < t \leq d-1$, let $\varSigma(s, t)$ and $\varLambda(s, t)$ be subsets of $\Delta_{\ell}$ defined as
$$\varSigma (s, t)  := \{ D(\alpha, \beta, \gamma, \delta, \lambda, \mu) \in \Sigma^{\star} \mid \lambda+\alpha+\gamma=s \thickspace \textup{and} \thickspace \mu+\beta+\delta=t  \}$$
and
$$ \varLambda (s, t) := \varSigma (s, t) \cap \Sigma  .$$
Also, let $V(s,t)$ be the subspace of $\Delta_{\ell}$ generated by $\varSigma(s, t)$. Then:

\renewcommand{\descriptionlabel}[1]%
             {\hspace{\labelsep}\textrm{#1}}
\begin{description}
\setlength{\labelwidth}{13mm}
\setlength{\labelsep}{1.5mm}
\setlength{\itemindent}{0mm}

\item[$(1)$] $\beta_{s, t}$ is contained in $V(s, t)$;

\item[$(2)$] $\varLambda(s, t)$ is $\mathbb{K}$-linearly independent;

\item[$(3)$] Let $\varGamma$ be a basis of $V(s,t)$ that contains $\varLambda(s, t)$ and is contained in $\varSigma (s, t)$. Then, when $\beta_{s, t}$ is expressed as a $\mathbb{K}$-linear combination of the elements in $\varGamma$, all the coefficients of the elements in $\varLambda (s, t)$ are nonzero.
\end{description}
\end{proposition}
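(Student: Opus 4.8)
The plan is to prove the three parts in order, since each relies on the previous one, and to extract everything from the explicit description of the $D(\alpha,\beta,\gamma,\delta,\lambda,\mu)$ and Lemma \ref{lem:basic coefficients}.

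For part $(1)$, I would start from Lemma \ref{lem:basic coefficients}.$(1)$, which writes $\beta_{s,t}=-\alpha_{s-1,t}+\alpha_{s,t-1}$, together with parts $(2)$ and $(3)$ of that lemma expressing each $\alpha_{i,j}$ as a sum of $\beta$'s; unwinding this, $\beta_{s,t}$ is a $\mathbb{K}$-linear combination of various products $\beta_{a,b}z_az_b$-type terms — more usefully, I would argue directly. The coefficient $\beta_{s,t}$ of $z_sz_t$ in $Q_\ell(f,g,h)=\varphi(f^2h)\varphi(g^2h)-\varphi(fgh)^2$ is obtained by extracting the coefficient of $x^{2d-s-t}y^{s+t}$ in $(f^2h)(g^2h)-(fgh)^2$. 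Writing $f=\sum A_\alpha x^{\ell-\alpha}y^\alpha$, $g=\sum B_\beta x^{\ell-\beta}y^\beta$, $h=\sum C_\lambda x^{d-2\ell-\lambda}y^\lambda$ and expanding, every monomial appearing in $\beta_{s,t}$ is of the form (constant)$\cdot C_\lambda C_\mu A_{i_1}A_{i_2}B_{j_1}B_{j_2}$ — quadratic in the $C$'s and of total degree $4$ in the $A,B$'s — with $\lambda+\mu+i_1+i_2+j_1+j_2 = s+t$, and after collecting $A$'s and $B$'s into minors $q_{\alpha\beta}$, each term is a scalar multiple of some $D(\alpha,\beta,\gamma,\delta,\lambda,\mu)$; the grading $\lambda+\alpha+\gamma$ in the "$x$-degree'' and $\mu+\beta+\delta$ in the "$y$-degree'' must match $(s,t)$ because the two factors $f^2h$ and $g^2h$ (resp. the square $(fgh)^2$) distribute their bidegrees symmetrically. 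Hence $\beta_{s,t}\in V(s,t)$. The precise bookkeeping — confirming that after symmetrization the bidegree splits as $(\lambda+\alpha+\gamma,\ \mu+\beta+\delta)$ — is the one routine verification here.

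For part $(2)$, I would show $\varLambda(s,t)=\varSigma(s,t)\cap\Sigma$ is $\mathbb{K}$-linearly independent by simply noting it is a subset of $\Sigma$, which by Definition and Remark \ref{definition of Q map}.$(4)$ (via Proposition \ref{basisplu} and the Künneth identification $\Delta_\ell\cong\mathbb{K}[q_{\alpha\beta}]_2\otimes\mathbb{K}[C_k]_2$) is a basis of $\Delta_\ell$; a subset of a basis is linearly independent. This part is essentially immediate once part $(1)$'s setup is in place.

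For part $(3)$, this is the main obstacle and where real work is needed. The point is that although $\varGamma$ is some basis of $V(s,t)$ containing $\varLambda(s,t)$ and contained in $\varSigma(s,t)$, the "extra'' basis elements of $\varGamma\setminus\varLambda(s,t)$ lie in $\varSigma(s,t)\setminus\Sigma$, i.e. each is a $D(\alpha,\beta,\gamma,\delta,\lambda,\mu)$ failing one of the normalization conditions ($\lambda\le\mu$, or $\alpha\le\gamma\le\beta,\delta$, or the non-repetition condition), hence by Theorem \ref{thm:Grass Plucker relations} and the reordering/symmetry relations among the $q$'s it rewrites, modulo $\Sigma$, as a $\mathbb{K}$-linear combination of elements of $\varLambda(s,t)$ only — with no cancellation into elements outside $\varSigma(s,t)$, since all these relations preserve the bidegree $(\lambda+\alpha+\gamma,\ \mu+\beta+\delta)$. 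Therefore, when we take the (unique) expression of $\beta_{s,t}$ in the true basis $\Sigma$ of $\Delta_\ell$ and then pass to the basis $\varGamma$ of $V(s,t)$, the coefficient of each $\sigma\in\varLambda(s,t)$ in the $\varGamma$-expansion equals its coefficient in the $\Sigma$-expansion (the $\varGamma$-extra terms only feed back into $\varLambda(s,t)$ when rewritten in $\Sigma$, but conversely expanding $\sigma\in\varGamma\cap\Sigma$ in $\Sigma$ just gives $\sigma$ itself). So it suffices to show that in the unique $\Sigma$-expansion of $\beta_{s,t}$, every element of $\varLambda(s,t)$ occurs with nonzero coefficient. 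This I would establish by the explicit combinatorial computation of $\beta_{s,t}$ from part $(1)$: extract the coefficient of $x^{2d-s-t}y^{s+t}$ in $(f^2h)(g^2h)-(fgh)^2$, collect it as a polynomial in the $q_{\alpha\beta}$'s and $C_\lambda$'s, reduce to the basis $\Sigma$ using the Plücker relation $q_{\alpha\beta}q_{\gamma\delta}=q_{\alpha\gamma}q_{\beta\delta}-q_{\alpha\delta}q_{\beta\gamma}$, and check that each surviving basis monomial indexed in $\varLambda(s,t)$ has a strictly nonzero (in fact, I expect, sign-definite or integer) coefficient — the characteristic-zero hypothesis guarantees these integer coefficients do not vanish. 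The delicate point is the combinatorial identity tracking how the two Plücker-type contributions from $f^2h\cdot g^2h$ versus $(fgh)^2$ combine without accidental cancellation on the index set $\varLambda(s,t)$; this is exactly the kind of "useful property of these polynomials'' the introduction promises, and it is where the bulk of the argument lives.
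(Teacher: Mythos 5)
Your parts $(1)$ and $(2)$ follow the paper's route. For $(2)$ the argument (subset of the basis $\Sigma$) is exactly the paper's one-line proof. For $(1)$ your idea is right, but the step you call ``routine bookkeeping'' is where the paper does real work: a given monomial $C_\lambda C_\mu A_{i_1}A_{i_2}B_{j_1}B_{j_2}$ only tells you that \emph{some} three of the six indices (two from the $A,B$ group and one from the $C$ group) sum to $s$; these need not sit in the first, third and fifth slots of the particular $D(\alpha,\beta,\gamma,\delta,\lambda,\mu)$ you wrote down. One must actively reposition them using the antisymmetries $D(\alpha,\beta,\ldots)=-D(\beta,\alpha,\ldots)$, the swap of $\lambda,\mu$, the swap of the two $q$-factors, and (when the wanted index sits in the second slot) the Pl\"ucker relation. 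Your phrase ``distribute their bidegrees symmetrically'' does not substitute for this.

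Part $(3)$ has a genuine gap, in two places. First, your reduction from the $\varGamma$-expansion to the $\Sigma$-expansion is unjustified and in fact internally inconsistent: if, as you assert, the elements of $\varGamma\setminus\varLambda(s,t)$ ``feed back into $\varLambda(s,t)$ when rewritten in $\Sigma$,'' then their contributions would \emph{change} the coefficients of the elements of $\varLambda(s,t)$, so the $\varGamma$-coefficients and the $\Sigma$-coefficients would not agree. (Moreover the rewriting need not stay inside $\varSigma(s,t)$ at all: e.g.\ normalizing $\lambda>\mu$ to $\mu<\lambda$ changes the sum $\lambda+\alpha+\gamma$, so the resulting $\Sigma$-elements can land outside $\varLambda(s,t)$.) Second, even granting the reduction, the non-vanishing itself is only asserted; you explicitly defer ``the delicate point,'' which is the whole content of $(3)$. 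The paper's device, which your proposal is missing, is a \emph{witness monomial}: for $D=D(\alpha,\beta,\gamma,\delta,\lambda,\mu)\in\varLambda(s,t)$ one takes the cross term $D_2=C_\lambda C_\mu A_\beta A_\gamma B_\alpha B_\delta$ from the expansion of $D$ and checks (i) that no other element of $\varSigma(s,t)$ — hence no other element of $\varGamma$ — contains $D_2$, using the normalization inequalities $\alpha<\gamma<\beta,\delta$ and $\lambda\le\mu$ to rule out the two Pl\"ucker partners of $D$; and (ii) that $D_2$ is produced only by $-\varphi(fgh)^2$ and never by $\varphi(f^2h)\varphi(g^2h)$, since the latter would force the complementary triple of indices to sum to $s<t$, contradicting $\alpha+\gamma+\lambda=s$ together with those same inequalities. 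Point (i) makes the coefficient of $D$ in the $\varGamma$-expansion directly readable off the monomial $D_2$ in $\beta_{s,t}$, bypassing your $\Sigma$-comparison entirely, and point (ii) is precisely the no-cancellation statement you left open.
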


\begin{proof}
\noindent $(1)$  By using Definition and Remark \ref{definition of Q map}.$(3)$, $(4)$ and Lemma \ref{lem:basic coefficients}.$(1)$,
we know that $\beta_{s, t}$ can be expressed as a $\mathbb{K}$-linear combination of $D(\alpha, \beta, \gamma, \delta, \lambda, \mu) \in \Sigma^{\star}$, as shown below:
\[
\beta_{s, t} =  \sum d(\alpha, \beta, \gamma, \delta, \lambda, \mu) D(\alpha, \beta, \gamma, \delta, \lambda, \mu)
\]
where $d(\alpha, \beta, \gamma, \delta, \lambda, \mu) \in \mathbb{K}$.
Thus the remaining task is to show that the indices (i.e., $\alpha, \beta, \gamma, \delta, \lambda, \mu)$ in this linear combination satisfy the following condition:
\begin{align} \label{sumcon}
    \lambda+\alpha+\gamma=s ; \quad \mu+\beta+\delta=t .
\end{align}
The strategy of the proof is to show that, given $D(\alpha, \beta, \gamma, \delta, \lambda, \mu)$, it can be replaced by another $D:=D(-, -, -, -, -, -)$ that satisfies the condition \eqref{sumcon}.
This will be proven in two steps, as outlined below.

The first step is to determine the equation that the indices of the given $D(\alpha, \beta, \gamma, \delta, \lambda, \mu)$ must satisfy.
To do this, let us consider one of four terms that appear when expanding $D(\alpha, \beta, \gamma, \delta, \lambda, \mu)$, denoted as $D_1$ below:
\[
D_1:=C_{\lambda} C_{\mu} A_\alpha  A_\gamma B_\beta  B_\delta .
\]
By the construction of $Q_\ell$-mapping, the term $D_1$ can be obtained from $\varphi(f^2h)\varphi(g^2h)$ or $\varphi(fgh)^2$.
Thus, we will examine two cases below. But first, for convenience, let us divide the six indices of $D_1$ as below:
\[
\mathcal{I}_A := \{ \alpha, \gamma \}, \quad \mathcal{I}_B := \{ \beta, \delta \}, \quad \mathcal{I}_C := \{ \lambda, \mu \} .
\]
\noindent \underline{Case 1:} Suppose that $D_1$ is obtained from $\varphi(f^2h)\varphi(g^2h)$.
Then since $\beta_{s, t}$ is the coefficient of $z_s z_t$, the following equation holds.
\begin{align*}
    &\varphi((A_{i_1}x^{d-{i_1}}y^{i_1})(A_{i_2}x^{d-{i_2}}y^{i_2})(C_{i_3}x^{d-{i_3}}y^{i_3}))\varphi((B_{i_4}x^{d-{i_4}}y^{i_4})(B_{i_5}x^{d-{i_5}}y^{i_5})(C_{i_6}x^{d-{i_6}}y^{i_6})) \\
    &= A_{i_1} A_{i_2} B_{i_4} B_{i_5} C_{i_3} C_{i_6} z_{i_1+i_2+i_3} z_{i_4+i_5+i_6} = A_\alpha  A_\gamma B_\beta  B_\delta C_\lambda C_\mu z_s z_t = D_1 z_s z_t
\end{align*}
Thus, if we choose two elements from either $\mathcal{I}_A$ or $\mathcal{I_B}$ and one element from $\mathcal{I_C}$, the sum of these three indices will be $s$ or $t$,
while the remaining three indices will have sum $t$ or $s$, respectively.
In other words, to select three indices $i, j, k$ whose sum equals $s$, we need to choose $i, j$ from either $\mathcal{I}_A$ or $\mathcal{I_B}$, and $k$ from $\mathcal{I_C}$, i.e.,
\[
\left( i, j \in \mathcal{I}_A \mspace{13mu} \text{or} \mspace{13mu} i, j \in \mathcal{I}_B \right) \quad \text{and} \quad k \in \mathcal{I}_C .
\]
Therefore, the six indices of $D(\alpha, \beta, \gamma, \delta, \lambda, \mu)$ must satisfy one of the following $2 \times 2=4$ conditions.
\begin{align*}
    &\textbf{$1$.} \quad \lambda+\alpha+\gamma=s  , \quad \mu+\beta+\delta=t \\
    &\textbf{$2$.} \quad \mu+\alpha+\gamma=s  , \quad \lambda+\beta+\delta=t \\
    &\textbf{$3$.} \quad \lambda+\beta+\delta=s  , \quad \mu+\alpha+\gamma=t \\
    &\textbf{$4$.} \quad \mu+\beta+\delta=s  , \quad \lambda+\alpha+\gamma=t
\end{align*}
\noindent \underline{Case 2:} Suppose that $D_1$ is obtained from $\varphi(fgh)^2$.
Again, since $\beta_{s, t}$ is the coefficient of $z_s z_t$, the following equation holds.
\begin{align*}
    &\varphi((A_{i_1}x^{d-{i_1}}y^{i_1})(B_{i_2}x^{d-{i_2}}y^{i_2})(C_{i_3}x^{d-{i_3}}y^{i_3}))\varphi((A_{i_4}x^{d-{i_4}}y^{i_4})(B_{i_5}x^{d-{i_5}}y^{i_5})(C_{i_6}x^{d-{i_6}}y^{i_6})) \\
    &= A_{i_1} A_{i_4} B_{i_2} B_{i_5} C_{i_3} C_{i_6} z_{i_1+i_2+i_3} z_{i_4+i_5+i_6} = A_\alpha  A_\gamma B_\beta  B_\delta C_\lambda C_\mu z_s z_t = D_1 z_s z_t
\end{align*}
Thus, if we choose one element each from $\mathcal{I}_A$, $\mathcal{I_B}$ and $\mathcal{I_C}$, the sum of these three indices will be $s$ or $t$,
while the remaining three indices will have sum $t$ or $s$, respectively.
In other words, to select three indices $i, j, k$ whose sum equals $s$, we need to choose $i$ from $\mathcal{I}_A$, $j$ from $\mathcal{I_B}$, and $k$ from $\mathcal{I_C}$, i.e.,
\[
i \in \mathcal{I}_A \quad \text{and} \quad j \in \mathcal{I}_B  \quad \text{and} \quad k \in \mathcal{I}_C .
\]
Therefore, the six indices of $D(\alpha, \beta, \gamma, \delta, \lambda, \mu)$ must satisfy one of the following $2^3=8$ conditions.
\begin{align*}
    &\textbf{$1$.} \quad \lambda+\alpha+\beta=s  , \quad \mu+\gamma+\delta=t \\
    &\textbf{$2$.} \quad \lambda+\alpha+\delta=s  , \quad \mu+\gamma+\beta=t \\
    &\textbf{$3$.} \quad \lambda+\gamma+\beta=s  , \quad \mu+\alpha+\delta=t \\
    &\textbf{$4$.} \quad \lambda+\gamma+\delta=s  , \quad \mu+\alpha+\beta=t \\
    &\textbf{$5$.} \quad \mu+\alpha+\beta=s  , \quad \lambda+\gamma+\delta=t \\
    &\textbf{$6$.} \quad \mu+\alpha+\delta=s  , \quad \lambda+\gamma+\beta=t \\
    &\textbf{$7$.} \quad \mu+\gamma+\beta=s  , \quad \lambda+\alpha+\delta=t \\
    &\textbf{$8$.} \quad \mu+\gamma+\delta=s  , \quad \lambda+\alpha+\beta=t
\end{align*}
Combining the two cases, we conclude that the six indices of $D(\alpha, \beta, \gamma, \delta, \lambda, \mu)$ must satisfy at least one of the $12$ conditions listed above.
In fact, this is equivalent to choose two indices $i, j$ from $\mathcal{I}_A \cup \mathcal{I}_B$ and one index $k$ from $\mathcal{I}_C$ such that $i+j+k=s$, i.e.,
\[
i, j \in \mathcal{I}_A \cup \mathcal{I}_B  \quad \text{and} \quad k \in \mathcal{I}_C .
\]

The next step is to find another $D$ whose indices satisfy the condition \eqref{sumcon}.
In the previous step, we proved that, given $D(\alpha, \beta, \gamma, \delta, \lambda, \mu)$, there are three indices $i, j, k \in \mathcal{I}_A \cup \mathcal{I}_B \cup \mathcal{I}_C$ whose sum is $s$, and the remaining three indices have sum $t$.
Specifically, $i$ and $j$ are in $\mathcal{I}_A \cup \mathcal{I}_B$ and $k$ is in $\mathcal{I}_C$.
For the remaining three indices, let us denote the two from $\mathcal{I}_A \cup \mathcal{I}_B$ by $i'$ and $j'$, and the one from $\mathcal{I}_C$ by $k'$.
This gives us the following relation:
\[
i+j+k=s, \quad i'+j'+k'=t.
\]
And since $i, j, i', j' \in \mathcal{I}_A \cup \mathcal{I}_B$, they correspond to the first to fouth indices in $D(\alpha, \beta, \gamma, \delta, \lambda, \mu)$.
On the other hand, $k$ and $k'$ are the fifth and sixth indices in $D(\alpha, \beta, \gamma, \delta, \lambda, \mu)$.
First, we will fix $i, k, k'$ in the $1st, 5th, 6th$ positions.
Consider the following equalities, which follow immediately from $D(\alpha, \beta, \gamma, \delta, \lambda, \mu)=C_\lambda C_\mu q_{\alpha \beta} q_{\gamma \delta}$.
\begin{align*}
D(\alpha, \beta, \gamma, \delta, \lambda, \mu) &= - D(\beta, \alpha, \gamma, \delta, \lambda, \mu) \mspace{50mu}  ( \text{switching $\alpha$ and $\beta$} ) \\
D(\alpha, \beta, \gamma, \delta, \lambda, \mu) &= - D(\alpha, \beta, \delta, \gamma, \lambda, \mu) \mspace{50mu}  (\text{switching $\gamma$ and $\delta$}) \\
D(\alpha, \beta, \gamma, \delta, \lambda, \mu) &= D(\alpha, \beta, \gamma, \delta, \mu, \lambda)  \mspace{65mu} (\text{switching $\lambda$ and $\mu$}) \\
D(\alpha, \beta, \gamma, \delta, \lambda, \mu) &= D(\gamma, \delta, \alpha, \beta, \lambda, \mu) \mspace{60mu} (\text{switching $p_{\alpha \beta}$ and $p_{\gamma \delta}$})
\end{align*}
Using the four equalities above, we can easily fix $i, j, k$ in the desired positions, allowing us to express $D$ as $D(i, - , -, -, k, k' )$.
Next, we need to fix $j$ as the third index.
If $j$ is already the third or fourth index, we can use the same equalities as before.
If $j$ is the second index, we can apply the following Plücker relation to move $j$ to the third position.
\[
 D(\alpha, \gamma, \beta, \delta, \lambda, \mu) = -D(\alpha, \delta, \gamma, \beta, \lambda, \mu) + D(\alpha, \beta, \gamma, \delta, \lambda, \mu)
\]
If there is no relation because $i$ (or $j$) is equal to $i'$ or $j'$, we can also label $i'$ or $j'$ as $i$ (or $j$).
Now, we have $D=D(i, - , j, -, k, k' )$, and the proof is complete by reindexing the first through sixth indices by $\alpha, \beta, \gamma, \delta, \lambda, \mu$ respectivley.

\noindent $(2)$
By the definition of $\varLambda(s, t)$, it is clearly a subset of $\Sigma$ which is linearly independent.

\noindent $(3)$
    To show that the coefficient of any element $D(\alpha, \beta, \gamma, \delta, \lambda, \mu)$ in $\varLambda(s, t)$ is nonzero, let us consider the one of the four terms that appear when expanding $D(\alpha, \beta, \gamma, \delta, \lambda, \mu)$, denoted as $D_2$ below:
    \[
    D_2:=C_{\lambda} C_{\mu} A_\beta  A_\gamma B_\alpha  B_\delta.
    \]

    First, we claim that the coefficient of $D_2$ is nonzero if and only if $D(\alpha, \beta, \gamma, \delta, \lambda, \mu)$ has a nonzero coefficient.
    This will be proved once we show that any other $D:=D(-, -, -, -, -, -)$ cannot contain $D_2$ as one of its terms.
    Note that the six indices $\alpha, \beta, \gamma, \delta, \lambda, \mu$ appear in three cases:
    \begin{align} \label{threeD}
    D(\alpha, \beta, \gamma, \delta, \lambda, \mu), D(\alpha, \gamma, \beta, \delta, \lambda, \mu), D(\alpha, \delta, \gamma, \beta, \lambda, \mu) .
    \end{align}
    If $D$ does not contain all six indices, then it cannot contain $D_2$ as one of its terms.
    Therefore, we only need to consider these three cases.
    Moreover, if $\alpha, \beta, \gamma, \delta$ are not distinct, there is only one $D$ (the given $D$) left, so we are done.
    Thus, let us assume they are all distinct.
    It is clear that the third $D$ in \eqref{threeD} does not contain $D_2$ when expanding it.
    Since $D(\alpha, \beta, \gamma, \delta, \lambda, \mu)$ is an element of $\varSigma(s, t)$, we know that the indices satisfy the following condition:
    \begin{align*}
    \lambda+\alpha+\gamma=s ; \quad \mu+\beta+\delta=t .
    \end{align*}
    And since $D(\alpha, \beta, \gamma, \delta, \lambda, \mu)$ is also an element of $\varLambda(s, t)$, it holds that
    \[
    \alpha < \gamma < \beta, \delta \quad \text{and} \quad \lambda \leq \mu .
    \]
    This implies that the second $D$ in \eqref{threeD} is not in $\varSigma(s, t)$ even if we consider $4$ switchings in the proof of $(1)$.
    Therefore, only the first $D$ contains $D_2$ as one of its terms.

    Next, we will show that the coefficient of $D_2$ is nonzero.
    By the construction of $Q_\ell$-mapping, we know that the coefficient could be zero if $D_2$ is obtained from both $\varphi(f^2h)\varphi(g^2h)$ and $\varphi(fgh)^2$.
    It is clear from the proof of Case 2 of $(1)$ that $D_2$ can be constructed from $\varphi(fgh)^2$.
    However, it cannot be constructed from $\varphi(f^2h)\varphi(g^2h)$.
    Indeed, according to the proof of Case 1 of $(1)$, it means that three indices other than $\alpha, \gamma, \lambda$ have sum $s$, which contradicts the condition for $\varLambda(s, t)$ as above.
\end{proof}

\begin{corollary} \label{indalpha}
For a given $1 \leq \ell \leq d/2$, if $\alpha_{0, i}=0$ for all $1 \leq i \leq 2\ell-1$, then $\alpha_{0, \ell'} = 0$ for any $\ell' > 2\ell-1$.
\end{corollary}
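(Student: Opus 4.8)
The plan is to read Corollary \ref{indalpha} pointwise: I will show that every point $P$ of $\mathbb{G}(1,\P^{\ell})\times\P^{d-2\ell}$ at which $\alpha_{0,1},\dots,\alpha_{0,2\ell-1}$ all vanish is also a zero of $\alpha_{0,\ell'}$ for each $\ell'>2\ell-1$. (When $\ell'\ge d$ there is nothing to prove, since then $\alpha_{0,\ell'}=0$ by the convention in Definition and Remark \ref{definition of Q map}.$(2)$.) First I would pass from the $\alpha$'s to the $\beta$'s: Lemma \ref{lem:basic coefficients}.$(2)$ applied with $i=0$ gives $\alpha_{0,j}=\beta_{0,j+1}$ for all $1\le j\le d-1$, so the hypothesis at $P$ reads $\beta_{0,t}(P)=0$ for $2\le t\le 2\ell$, and it suffices to prove that $\beta_{0,s}$ vanishes at $P$ for every $s$, because then $\alpha_{0,\ell'}(P)=\beta_{0,\ell'+1}(P)=0$.

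The structural input is Proposition \ref{alphabetapro}.$(1)$. Since $\lambda+\alpha+\gamma=0$ forces $\lambda=\alpha=\gamma=0$, the subspace $V(0,t)$ is spanned by the monomials $D(0,\beta,0,\delta,0,\mu)=C_0C_\mu q_{0\beta}q_{0\delta}$ with $\mu+\beta+\delta=t$, and only those with $1\le\beta,\delta\le\ell$ are nonzero (as $q_{00}=0$). Consequently every $\beta_{0,s}$ is $C_0$ times a bihomogeneous form in $C_0,\dots,C_{d-2\ell}$ and $q_{01},\dots,q_{0\ell}$. In particular, if either $C_0$ vanishes at $P$, or $q_{0k}$ vanishes at $P$ for all $1\le k\le\ell$, then $\beta_{0,s}$ vanishes at $P$ for every $s$ and we are done. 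So the entire statement reduces to the following claim: if $C_0(P)\ne 0$, then $q_{0k}(P)=0$ for every $1\le k\le\ell$.

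I would prove this claim by induction on $k$. Suppose $q_{0j}(P)=0$ for $1\le j<k$ (vacuous when $k=1$). By Proposition \ref{alphabetapro}.$(3)$ with $(s,t)=(0,2k)$, choose a basis $\varGamma$ of $V(0,2k)$ with $\varLambda(0,2k)\subseteq\varGamma\subseteq\varSigma(0,2k)$ and write $\beta_{0,2k}=\sum_{D\in\varGamma}c_D D$; one checks directly that $D(0,k,0,k,0,0)$ satisfies the inequalities defining $\Sigma$, hence lies in $\varLambda(0,2k)$, so $c:=c_{D(0,k,0,k,0,0)}\ne 0$. Now restrict this identity to the closed subvariety $\{q_{01}=\cdots=q_{0,k-1}=0\}$, which contains $P$. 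Each $D\in\varGamma$ has the form $C_0C_\mu q_{0\beta}q_{0\delta}$ with $\mu+\beta+\delta=2k$ and $1\le\beta,\delta\le\ell$, and if it does not vanish identically on this subvariety then $\beta,\delta\ge k$, which (using $\mu\ge 0$) forces $\beta=\delta=k$ and $\mu=0$, i.e.\ $D=D(0,k,0,k,0,0)=C_0^2q_{0k}^2$. Hence $\beta_{0,2k}=c\,C_0^2q_{0k}^2$ along this subvariety; since $\beta_{0,2k}$ vanishes at $P$ (as $2\le 2k\le 2\ell$) and $c\ne 0$, the form $C_0^2q_{0k}^2$ vanishes at $P$, and as $C_0(P)\ne 0$ this forces $q_{0k}(P)=0$, completing the induction.

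The only genuinely delicate point is the inductive step, and within it the bookkeeping around Proposition \ref{alphabetapro}.$(3)$: one must verify that $D(0,k,0,k,0,0)$ really belongs to $\varLambda(0,2k)$, that after restricting to $\{q_{0j}=0:j<k\}$ no other element of $\varGamma$ survives to contribute a further multiple of $C_0^2q_{0k}^2$, and that $2\le 2k\le 2\ell$ throughout so that the vanishing of $\beta_{0,2k}$ at $P$ is indeed part of the hypothesis. Everything else is formal manipulation of the two presentations of $Q_\ell(f,g,h)$ already recorded in Lemma \ref{lem:basic coefficients} and Proposition \ref{alphabetapro}.
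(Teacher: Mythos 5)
Your proposal is correct and follows essentially the same route as the paper's proof: reduce to the expression $\alpha_{0,k}=\beta_{0,k+1}=\sum d(\beta,\delta,\mu)\,C_0C_\mu q_{0\beta}q_{0\delta}$ coming from Proposition \ref{alphabetapro}, dispose of the case $C_0=0$, and otherwise induct on $k$ using the odd-indexed hypotheses $\alpha_{0,2k-1}=\beta_{0,2k}=0$ together with the nonvanishing (via Proposition \ref{alphabetapro}.$(3)$) of the coefficient of $C_0^2q_{0k}^2$ to conclude $q_{0k}=0$ for all $k\le\ell$. The only difference is cosmetic (pointwise phrasing and the restriction-to-a-subvariety language in the inductive step), so no further comment is needed.
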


\begin{proof}
By Lemma \ref{lem:basic coefficients}(2) and Proposition \ref{alphabetapro}(1), the coefficients $\alpha_{0, k}$ can be expressed as follows:
\begin{align} \label{0k}
\alpha_{0, k} = \beta_{0, k+1} = \sum_{\begin{subarray}{c} \beta+\delta+\mu=k+1 \\ 1 \leq \beta, \delta \leq \ell, \ 0 \leq \mu \leq d-2\ell \end{subarray}} d(\beta, \delta, \mu) \ C_0 C_\mu q_{0 \beta} q_{0 \delta} .  \mspace{30mu} (d(\beta, \delta, \mu) \in \mathbb{K})
\end{align}
In particular, all the coefficients $d(\beta, \delta, \mu)$ are nonzero by Proposition \ref{alphabetapro}(3).

If $C_0=0$, the proof is complete since every term of $\alpha_{0, k}$ contains $C_0$ as a factor, which makes $\alpha_{0, k}=0$.
Therefore, let us assume that $C_0 \neq 0$.
We will now prove that the hypothesis of the corollary implies that $q_{0i}=0$ for all $i=1, \dots, \ell$.
Then this will show that $\alpha_{0, k}=0$, as every terms of it contains $q_{0i}$ as a factor as shown above.

To do this, let us use the induction on the index $i$ of $q_{0i}$.
By the hypothesis of the corollary and \eqref{0k}, we have:
\[
\alpha_{0, 1} = d(1, 1, 0) \ C_0^2 q_{0 1}^2 = 0.
\]
Since $C_0 \neq 0$, it follows that $q_{01}=0$.
Now, suppose $q_{0i}=0$ for all $i=1, \dots, j \ (\leq \ell)$.
Again by the hypothesis of the corollary and \eqref{0k}, we have:
\[
\alpha_{0, 2j+1} = d(j+1, j+1, 0) \ C_0^2 q_{0 (j+1)}^2 + \sum_{\begin{subarray}{c} \beta+\delta+\mu=2j+2 \\ 1 \leq \beta, \delta \leq \ell, \ 1 \leq \mu \leq d-2\ell \end{subarray}} d(\beta, \delta, \mu) \ C_0 C_\mu q_{0 \beta} q_{0 \delta} = 0 .
\]
By the induction hypothesis, this becomes
\[
\alpha_{0, 2j+1} = d(j+1, j+1, 0) \ C_0^2 q_{0 (j+1)}^2 = 0 .
\]
Similary, since $C_0 \neq 0$, this gives us $q_{0 (j+1)}=0$.
\end{proof}

Now we will demonstrate the existence of $\varLambda$ in all coordinates.
In fact, all the statements above can be generalized for the Veronese varieties except for this existence.

\begin{proposition} \label{nondegterm}
The set $\varLambda(i, j+1)$ in Proposition \ref{alphabetapro} is nonempty for all $0 \leq i < j \leq d-1$.
\end{proposition}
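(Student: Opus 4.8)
The plan is to unwind the definitions so that the statement becomes a purely combinatorial existence problem about index sums, and then settle that problem by an explicit construction. Fix $1 \le \ell \le d/2$ and write $m := d-2\ell \ge 0$. By Definition and Remark \ref{definition of Q map}.$(4)$, the set $\varLambda(i,j+1)$ is nonempty precisely when there are integers $\alpha,\beta,\gamma,\delta \in [0,\ell]$ and $\lambda,\mu \in [0,m]$ with
$$0 \le \alpha \le \gamma \le \beta,\qquad \gamma \le \delta,\qquad \alpha \neq \beta,\qquad \gamma \neq \delta,\qquad \lambda \le \mu,$$
together with $\alpha+\gamma+\lambda = i$ and $\beta+\delta+\mu = j+1$. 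Put $s := i$ and $t := j+1$; the hypothesis $0 \le i < j \le d-1$ is then equivalent to $s \ge 0$ and $s+2 \le t \le d$, and in particular $s \le d-2 = 2\ell+m-2$. Note also that $\alpha<\beta\le\ell$ and $\gamma<\delta\le\ell$ force $\alpha,\gamma\le\ell-1$, so there is no loss in restricting to such tuples.

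The idea that decouples the two equations is the following. Suppose the first triple has already been chosen so that $\alpha+\gamma+\lambda = s$ with $0 \le \alpha \le \gamma \le \ell-1$, $0 \le \lambda \le m$, and the extra condition $\gamma \le \alpha+1$. Then $\max(\gamma,\alpha+1) = \alpha+1$, so the remaining constraints amount exactly to $\alpha+1 \le \beta \le \ell$, $\gamma+1 \le \delta \le \ell$, $\lambda \le \mu \le m$ (the inequality $\gamma \le \beta$ is automatic, since $\gamma \le \alpha+1 \le \beta$). As each of $\beta,\delta,\mu$ runs independently over a block of consecutive integers, the sum $\beta+\delta+\mu$ attains every integer value in the interval
$$\bigl[(\alpha+1)+(\gamma+1)+\lambda,\; \ell+\ell+m\bigr] = [\,s+2,\; d\,].$$
Since $s+2 \le t \le d$ by hypothesis, we may choose $\beta,\delta,\mu$ with $\beta+\delta+\mu = t$, and then $D(\alpha,\beta,\gamma,\delta,\lambda,\mu)$ satisfies every defining condition of $\Sigma$ and hence lies in $\varLambda(s,t)$.

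What remains is the numerical claim: for every integer $s$ with $0 \le s \le 2\ell+m-2$ there exist $\alpha,\gamma,\lambda$ with $0 \le \alpha \le \gamma \le \ell-1$, $\gamma \le \alpha+1$, $0 \le \lambda \le m$ and $\alpha+\gamma+\lambda = s$. I would prove this by a short case analysis. If $s \le m$, take $\alpha = \gamma = 0$ and $\lambda = s$. If $s > m$ (which forces $\ell \ge 2$, since $\ell = 1$ gives $s \le d-2 = m$), set $\lambda = m$ and $s' := s-m$, so that $1 \le s' \le 2\ell-2$: for $s'$ even take $\alpha = \gamma = s'/2 \le \ell-1$, and for $s'$ odd take $\alpha = (s'-1)/2$, $\gamma = (s'+1)/2 = \alpha+1$, which still obeys $\gamma \le \ell-1$ because an odd integer $s' \le 2\ell-2$ is necessarily $\le 2\ell-3$. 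Feeding such a triple into the construction of the previous paragraph completes the proof.

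The interval arithmetic of the second paragraph and the parity bookkeeping of the third are routine; the one point that needs care — and what I regard as the heart of the argument — is isolating the right notion of a ``good'' representation of $s$, namely the condition $\gamma \le \alpha+1$. It is exactly this that pins the minimum value of $\beta+\delta+\mu$ at $s+2$ rather than something larger, so that the hypothesis $t \ge s+2$ can be used with no slack to spare. One should also sanity-check the degenerate ranges $\ell = 1$ (only the first case of the numerical claim can occur) and $m = 0$ (where $\lambda = \mu = 0$ is forced, so the odd-$s'$ case carries the load), together with the extreme pairs $(s,t) = (0,d)$ and $(d-2,d)$.
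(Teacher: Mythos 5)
Your proof is correct and pursues the same basic strategy as the paper: both arguments establish nonemptiness by explicitly exhibiting a witness tuple $(\alpha,\beta,\gamma,\delta,\lambda,\mu)$ satisfying the ordering constraints of $\Sigma$ and the two index-sum equations. The bookkeeping differs, though: the paper splits on whether $j+1\le 2\ell$ (taking $\lambda=\mu=0$ and distributing $i$ and $j+1$ as evenly as possible over $(\alpha,\gamma)$ and $(\beta,\delta)$, with a four-way parity analysis) or $j+1>2\ell$ (taking $\beta=\delta=\ell$ and $\mu=j+1-2\ell$), whereas you split on whether $i\le d-2\ell$ and then dispose of the second equation $\beta+\delta+\mu=j+1$ in one stroke via the interval-sum observation. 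Your normalization $\gamma\le\alpha+1$, which pins the minimum attainable value of $\beta+\delta+\mu$ at exactly $i+2$ so that the hypothesis $j+1\ge i+2$ is used with no slack, is the right pivot and makes the verification of the constraints somewhat cleaner than the paper's parity casework; the degenerate ranges ($\ell=1$, $d=2\ell$, and the extremes $t=i+2$ and $t=d$) all check out.
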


\begin{proof}
We will construct a specific term $D:=D(\alpha, \beta, \gamma, \delta, \lambda, \mu)$ in $\varLambda(i, j+1)$ and verify that it indeed belongs to $\varLambda(i, j+1)$.
Specifically, the term should satisfy the following conditions:
\begin{align*}
    &(1) \quad 0 \leq \alpha \leq \gamma \leq \beta, \delta \leq \ell,  \ \alpha \neq \beta, \ \gamma \neq \delta, \ 0 \leq \lambda, \mu \leq d-2\ell; &(\because \ D \in \Sigma) \\
    &(2) \quad \lambda+\alpha+\gamma = i , \quad \mu+\beta+\delta=j+1.  &(\because \ D \in \varSigma(i, j+1))
\end{align*}
\\
\noindent \underline{Case 1:} Suppose $j+1 \leq 2l$. Let $\lambda=\mu=0$ and let
\begin{align*}
\begin{cases}
    \alpha=m, \beta=n, \gamma=m, \delta=n & \text{if $i$ is even and $j+1$ is even} \\
    \alpha=m, \beta=n, \gamma=m, \delta=n+1 & \text{if $i$ is even and $j+1$ is odd} \\
    \alpha=m, \beta=n, \gamma=m+1, \delta=n & \text{if $i$ is odd and $j+1$ is even} \\
    \alpha=m, \beta=n, \gamma=m+1, \delta=n+1 & \text{if $i$ is odd and $j+1$ is odd}
\end{cases}
\end{align*}
with $m=\lfloor \frac{i}{2} \rfloor$ and $n=\lfloor \frac{j+1}{2} \rfloor$.
We need to prove that all the indices of $D$ satisfy the two conditions $(1)$ and $(2)$ mentioned above.
Since  the condition $(2)$ is straightforward to verify, we omit its proof here.
Thus, it remains to show that the condition $(1)$ holds.
As $\lambda=\mu=0$, it is obvious that the two indices satisfy $(1)$.
For the remaining indices $\alpha, \beta, \gamma, \delta$, let us consider relation between $i$ and $j$.
Since $i<j$, we have $i+2 \leq j+1$, which implies that $m < m+1 \leq n$.
As $m$ is clearly nonnegative, the construction gives us that
\[
0 \leq \alpha \leq \gamma \leq \beta \leq \delta \quad \textup{and} \quad \alpha < \beta.
\]
Additionally, since $j+1 \leq 2\ell$ by the hypothesis, it follows that $\delta \leq \ell$.
The remaining task is to show that $\gamma \neq \delta$.
Since $m < n$, this can be proven easily, except in the case when $i$ and $j$ are both odd.
If $i$ and $j$ are both odd and $i<j$, we have $i+3 \leq j+1$, which gives us
\[
m+1 \ = \ \left\lfloor \frac{i}{2} \right\rfloor +1 \  = \ \left\lfloor \frac{i+2}{2} \right\rfloor \  < \ \left\lfloor \frac{i+3}{2} \right\rfloor \  \leq \ \left\lfloor \frac{j+1}{2} \right\rfloor \ = \ n.
\]
\\
\noindent \underline{Case 2:} Suppose $j+1 > 2l$.
First, set $\beta=\delta=\ell$ and $\mu=j+1-2\ell$, and assume that $(\alpha, \gamma, \delta)$ is a parition of $i$ such that
\[
0 \leq \alpha < \beta, \quad 0 \leq \gamma < \delta, \quad 0\leq \lambda \leq \mu.
\]
Then by the construction, all the indices satisfy the conditions $(1)$ and $(2)$.
Thus, it remains to show that such a partition actually exists.
To do this, it is enough to show that the maximum possible values of $\alpha, \gamma, \lambda$ add up to at least $j-1$.
This can be shown as follows:
\[
\alpha+\gamma+\lambda \ \leq \ (\beta-1)+(\delta-1)+\mu \ = \ (\ell-1)+(\ell-1)+(j+1-2\ell) \ = \ j-1.
\]
\end{proof}

\section{Proofs of Theorem \ref{thm:minimal and birational}} \label{3}
\noindent This section is devoted to giving a proof of Theorem \ref{thm:minimal and birational}.\\

\noindent {\bf Proof of Theorem \ref{thm:minimal and birational}.} $(1)$
Here we will prove that the irreducible decomposition of $\Phi_3 (\mathcal{C}_d )$ in Theorem \ref{thm:minimal and birational}.$(1)$ is minimal. Let us consider two distinct components $W_{\ell_1} (\mathcal{C}_d)$ and $W_{\ell_2} (\mathcal{C}_d)$ with $\ell_1 < \ell_2$. Since these two components are both irreducible and have the same dimension (cf. Definition and Remark \ref{definition of Q map}.$(3)$), minimality will be established once we show that $W_{\ell_2} \nsubseteq W_{\ell_1}$.
We will do this by finding a specific point in $W_{\ell_2} (\mathcal{C}_d)-W_{\ell_1} (\mathcal{C}_d)$.
For simplicity, let us focus only on the coefficients of
$$\{Q_{0,k}=z_0 z_{k+1}-z_1 z_{k} \mid 1 \leq k \leq 2\ell_2-1 \}.$$
We claim that there exists a point in $W_{\ell_2} (\mathcal{C}_d)-W_{\ell_1} (\mathcal{C}_d)$ where all coefficients are zero except for that of $Q_{0, 2\ell_2-1} = z_0 z_{2\ell_2}-z_1 z_{2\ell_2 -1}$.
That is,
\begin{align} \label{zerononzero}
\alpha_{0, 1}=\cdots=\alpha_{0, 2\ell_{2}-2}=0 \quad \textup{and} \quad \alpha_{0, 2\ell_2-1} \neq 0.
\end{align}
First we will explicitly construct such a point in $W_{\ell_2} (\mathcal{C}_d)$. Consider a quadric in $W_{\ell_2}$ given as follows.
\begin{align*}
Q_{\ell_2}(s^{\ell_2}, t^{\ell_2}, s^{d-2\ell_2}) &= z_{0}z_{2 \ell_2} - z_{\ell_2}^2 \\
                                                  &= (z_{0}z_{2\ell_2}-z_1 z_{2\ell_2-1}) + (z_{1}z_{2\ell_2-1}-z_2 z_{2\ell_2-2}) + \cdots + (z_{\ell_2-1}z_{\ell_2+1}-z_{\ell_2}^2) \\
                                                  &= Q_{0, 2\ell_2-1} + Q_{1, 2\ell_2-2} + \cdots + Q_{\ell_2-1, \ell_2}
\end{align*}
It is obvious that this quadric satisfies the condition \eqref{zerononzero}.
On the other hand, there is no point in $W_{\ell_1} (\mathcal{C}_d)$ that satisfies the condition \eqref{zerononzero}.
In fact, by Corollary \ref{indalpha}, if $\alpha_{0, i}$'s are all zero for $i=1$ to $2\ell_{2}-2$, then
\[
 \alpha_{0, 1}=\cdots=\alpha_{0, 2\ell_{1}-1}=0
\]
since $\ell_1 < \ell_2$. Therefore, all coordinates in $W_{\ell_1}$ are zero.

\noindent $(2)$ To show the nondegeneracy of $W_\ell(\mathcal{C}_d)$, it is enough to prove that all coordinates are $\mathbb{K}$-linearly independent. Since all coordinates have their terms in
\[
\Delta_\ell = \mathbb{K}[ \{ q_{\alpha \beta}  \mid 0 \leq \alpha < \beta \leq \ell \} ]_2 \otimes \mathbb{K}[C_0, \dots, C_{d-2\ell}]_2
\]
by Definition and Remark \ref{definition of Q map}(4), the Plücker relations should be considered. More precisely, there are linear relations among $D(\alpha, \beta, \gamma, \delta, \lambda, \mu)$'s:
\begin{align*}
&D(\alpha, \beta, \gamma, \delta, \lambda, \mu)-D(\alpha, \gamma, \beta, \delta, \lambda, \mu)+D(\alpha, \delta, \beta, \gamma, \lambda, \mu) = 0
\end{align*}
where $0 \leq \alpha < \beta < \gamma < \delta \leq \ell$ and $0 \leq \lambda, \mu \leq d-2\ell$. This corresponds to the Plücker relation
\[
p_{\alpha \beta} p_{\gamma \delta} - p_{\alpha \gamma} p_{\beta \delta} + p_{\alpha \delta} p_{\beta \gamma} = 0
\]
in the obvious way.
Note that there is a partition of $\alpha_{i, j}$'s which is decided by their index sum, namely
\[
M_{k}:= \{ \alpha_{i, j} \mid i+j=k \}
\]
for $1 \leq k \leq 2d-3$. For the same $k$, we further consider the set
\[
N_{k}:= \{ D(\alpha, \beta, \gamma, \delta, \lambda, \mu) \mid \alpha+\beta+\gamma+\delta+\lambda+\mu = k+1 \}
\]
which consists of all possible terms in $\alpha_{i, j} \in M_k$. It is obvious that all relations are made among elements in the same $N_k$. In other words, for any $k \neq k'$
\[
\langle N_k \rangle \cap \langle N_{k'} \rangle = 0.
\]
Hence we reduce the problem to show linear independence between all $\alpha_{i, j}$'s in $M_k$. To do this, fix the index sum $k \leq 2d-3$ and consider all coordinates in $M_k$:
\begin{align*}
\begin{cases}
\alpha_{0,k} \ , \ \alpha_{1, k-1} \ , \ \dots \ , \ \alpha_{\lceil \frac{k}{2}-1 \rceil, \lfloor \frac{k}{2}+1 \rfloor}            &\text{if $k \leq d-1$;} \\
\alpha_{k-d+1,d-1} \ , \ \alpha_{k-d+2, d-2} \ , \ \dots \ , \ \alpha_{\lceil \frac{k}{2}-1 \rceil, \lfloor \frac{k}{2}+1 \rfloor}  &\text{if $k \geq d$.}
\end{cases}
\end{align*}
Lemma \ref{lem:basic coefficients} and Proposition \ref{alphabetapro} tell us that the set $N_k$ can be divided into $\varSigma (i, k+1-i)$'s with $0 \leq i \leq \lceil \frac{k}{2}-1 \rceil$, and that each $\alpha_{i, k-i}$ is in the spanning set
\begin{align*}
\begin{cases}
\varSigma_{i} := \langle \varSigma (0, k+1) \cup \varSigma (1, k) \cup \cdots \cup \varSigma (i, k+1-i) \rangle            &\text{if $k \leq d-1$;} \\
\varSigma_{i} := \langle \varSigma (k-d+1, d) \cup \varSigma (k-d+2, d-1) \cup \cdots \cup \varSigma (i, k+1-i) \rangle  &\text{if $k \geq d$.}
\end{cases}
\end{align*}
Hence it suffices to show that for any $i$ the coordinate $\alpha_{i, j}$ has a term in $\varSigma (i, k+1-i = j+1)$ not in $\varSigma_{i-1}$ which induces a filtration of $\mathbb{K}$-vector spaces
\begin{align*}
\begin{cases}
\varSigma_{0} \ \subsetneq \ \varSigma_{1} \ \subsetneq \ \cdots \ \subsetneq \ \varSigma_{i-1} \ \subsetneq \ \varSigma_{i} \ \subsetneq \ \cdots            &\text{if $k \leq d-1$;} \\
\varSigma_{k-d+1} \ \subsetneq \ \varSigma_{k-d+2} \ \subsetneq \ \cdots \ \subsetneq \ \varSigma_{i-1} \ \subsetneq \ \varSigma_{i} \ \subsetneq \ \cdots &\text{if $k \geq d$.}
\end{cases}
\end{align*}
That is, $\alpha_{i, j} \in \varSigma_i - \varSigma_{i-1}$.

We claim that the element $D(\alpha, \beta, \gamma, \delta, \lambda, \mu)$ of $\varLambda(i, j+1)$, constructed in Proposition \ref{nondegterm}, is contained precisely in $\varSigma_i - \varSigma_{i-1}$.
First, since $D(\alpha, \beta, \gamma, \delta, \lambda, \mu)$ is an element of $\varLambda(i, j+1)$, it is also in $\varSigma_i$.
To ensure that $D(\alpha, \beta, \gamma, \delta, \lambda, \mu)$ is not included in $\varSigma_{i-1}$, it must not be a linear combination of elements contained in $\varSigma_{i-1}$.
Therefore, it is enough to show that none of three terms in the following Plücker relation (which contains $D(\alpha, \beta, \gamma, \delta, \lambda, \mu)$) are included in $\varSigma_{i-1}$.
\[
D(\alpha, \beta, \gamma, \delta, \lambda, \mu) - D(\alpha, \gamma, \beta, \delta, \lambda, \mu) + D(\alpha, \delta, \beta, \gamma, \lambda, \mu) = 0
\]
If any $D:=D(-, -, -, -, -, -)$ is included, by Proposition \ref{alphabetapro}(1), it means that two of $\alpha, \beta, \gamma, \delta$ and one of $\lambda, \mu$ have sum less than $i$.
This contradicts the conditions of $\varLambda(i, j+1)$:
\[
\alpha \leq \gamma \leq \beta, \delta, \quad \lambda \leq \mu, \quad \alpha+\gamma+\lambda=i.
\]

\noindent $(3)$ The morphism
$$\widetilde{Q_1} :  \mathbb{P}^{d-2} \rightarrow  \P^{{d \choose 2}-1}$$
is defined by the subset $\{ \alpha_{ij} ~|~ 0 \leq i < j \leq d-1 \}$ of $H^0 (  \P^{d-2} ,  \mathcal{O}_{\P^{d-2} } (2))$. Since the former subset is $\mathbb{K}$-linearly independent and the latter $\mathbb{K}$-vector space is of dimension ${d \choose 2}$, it follows that $\widetilde{Q_1}$ is the 2nd Veronese embedding of $\mathbb{P}^{d-2}$ and so $W_1 = \nu_2(\mathbb{P}^{d-2})$.   \qed \\

\section{Proof of Theorem \ref{thm:Singularity of W_ell (X))}}
\noindent In this section we will provide a proof of Theorem \ref{thm:Singularity of W_ell (X))}. We begin with fixing a few notations related to the $\widetilde{Q_{\ell}}$-maps of the Veronese varieties.

\begin{notation and remakrs}\label{not and rmk:Q-morphism}
Let $X = \nu_d (\P^n ) \subset \P^r ,\quad r = {{n+d} \choose {n}}-1$, be the $d$th Veronese embedding of $\P^n$.
Let $R = \K [x_0 ,  \ldots , x_n ]$ and $S = \K [ \{ z_I \mid I \in \mathcal{I}_d \} ]$ be respectively the homogeneous coordinate rings of $\P^n$ and $\P^r$,
where
\[
\mathcal{I}_d := \{ (a_0, \dots, a_n) \in \Z_{\geq 0}^{n+1} \mid a_0+\cdots+a_n=d \} .
\]
Thus there is a canonical isomorphism $\varphi : R_d \rightarrow S_1$ as $\mathbb{K}$-vector spaces. Now, let $e$ be the integer such that $d=2e$ or $d=2e+1$. For each $1 \leq \ell \leq e$, the morphism
$$\widetilde{Q_{\ell}} : \mathbb{G} (1,\P^p ) \times \P^q \rightarrow W_{\ell} (X)$$
is induced from the map
\begin{equation}\label{eq:Q-morphism}
Q_{\ell} : H^0 (\P^n , \mathcal{O}_{\P^n} (\ell)) \times H^0 (\P^n , \mathcal{O}_{\P^n} (\ell)) \times H^0 (\P^n , \mathcal{O}_{\P^n} (d-2\ell)) \rightarrow I(X)_2
\end{equation}
defined as
$$Q_{\ell} (f,g,h) = \varphi (f^2 h) \varphi (g^2 h ) - \varphi (fgh)^2 .$$
We already know the following facts about $W_{\ell} (X)$. The morphism $\widetilde{Q_{\ell}}$ in (\ref{eq:Q-morphism}) is defined by a base point free linear series contained in
$$V := H^0 (\mathbb{G} (1,\P^{p} ) \times \P^{q} ,  \mathcal{O}_{\mathbb{G} (1,\P^{p} ) \times \P^{q} } (2,2)).$$
More precisely, $W_{\ell} (X)$ in the projective space $\P (I(X)_2 )$ is the image of a linear projection of the linearly normal variety
$$\mathbb{G} (1,\P^{p} ) \times \P^{q} \subset \P (V)$$
since $\mathcal{O}_{\mathbb{G} (1,\P^{p} ) \times \P^{q} } (2,2)$ is a very ample line bundle of $\mathbb{G} (1,\P^{p} ) \times \P^{q}$.
\end{notation and remakrs}

To prove Theorem \ref{thm:Singularity of W_ell (X))}, we begin with the following lemma.

\begin{lemma} \label{injlem}
Let $q \in W_{\ell} (X)$ be a point. Then the following two conditions are equivalent:
\begin{enumerate}
\item[$(i)$] There exists a positive integer $s \leq min\{\ell, \lfloor \frac{d-2\ell}{2} \rfloor \}$ such that
$$q = \widetilde{Q_\ell} (\langle Cf' , Cg' \rangle , \langle D^2 h' \rangle)$$
for some $f', g' \in R_{\ell-s}$, $h' \in R_{d-2\ell-2s}$ and $\mathbb{K}$-linearly independent forms $C,D \in R_s$;
\item[$(ii)$] $\widetilde{Q_\ell} ^{-1} (q)$ has at least two distinct elements.
\end{enumerate}
\end{lemma}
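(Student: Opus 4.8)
The plan is to prove the two implications separately, using the structure of the $Q_\ell$-map together with the description of $\Phi_3(X)$ coming from \cite{Park}. The key underlying fact is that a rank $3$ quadric $Q$ determines its three ``slots'' up to a very limited ambiguity: writing $Q_\ell(f,g,h) = \varphi(f^2h)\varphi(g^2h) - \varphi(fgh)^2$, the point $\widetilde{Q_\ell}(\langle f,g\rangle,\langle h\rangle)$ remembers the unordered pencil $\langle f,g\rangle \subset R_\ell$ and the form $h^2 \in R_{2(d-2\ell)}$ (equivalently the point $\langle h\rangle \in \P^q$), but nothing finer. So the fiber $\widetilde{Q_\ell}^{-1}(q)$ is non-trivial precisely when there is a genuinely different choice of pencil/section producing the same quadric.

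For $(i)\Rightarrow(ii)$: given the factored data $f' , g' \in R_{\ell-s}$, $h' \in R_{d-2\ell-2s}$ and linearly independent $C, D \in R_s$, I would exhibit two distinct preimages explicitly. The first is $\bigl(\langle Cf', Cg'\rangle, \langle D^2 h'\rangle\bigr)$ itself. For the second, observe that
$$Q_\ell(Cf', Cg', D^2h') = \varphi\bigl((Cf')^2 D^2 h'\bigr)\varphi\bigl((Cg')^2 D^2 h'\bigr) - \varphi\bigl(Cf'\cdot Cg'\cdot D^2 h'\bigr)^2,$$
and each of the three arguments of $\varphi$ is divisible by $C^2 D^2$; regrouping, one checks that the same quadric equals $Q_\ell\bigl(DCf', DCg', (\text{const})\cdot h'\bigr)$ up to reshuffling one factor of $C$ from the outer squares into $h$-side — more precisely one compares $Q_\ell(Cf',Cg',D^2h')$ with $Q_\ell(Df', Dg', C^2 h')$, or with a pencil of the form $\langle CDf', CDg'\rangle$ paired with a different $h$. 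The point is that because $C$ and $D$ play symmetric roles inside $\varphi(f^2h)\varphi(g^2h)$ and $\varphi(fgh)^2$ after the common factor $C^2D^2$ is pulled out, swapping which of $C,D$ sits in the ``pencil part'' versus the ``$h$-part'' gives a second honest point of $\mathbb{G}(1,\P^p)\times\P^q$ mapping to $q$; and since $C,D$ are linearly independent, $D^2h'$ and $C^2h'$ are not proportional (and the two pencils differ), so the two preimages are distinct. I would write this comparison out as one display to pin down the exact second preimage.

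For $(ii)\Rightarrow(i)$: suppose $q = \widetilde{Q_\ell}(\langle f_1,g_1\rangle,\langle h_1\rangle) = \widetilde{Q_\ell}(\langle f_2,g_2\rangle,\langle h_2\rangle)$ with the two pieces of data not equal. The quadric $q$ has rank $3$, so its (projectivized) kernel and the three linear forms realizing it are determined up to the action of $\PGL_2$ on a rank-$3$ quadratic form; tracing this through the identification $\varphi : R_d \to S_1$, the three linear forms of $Q_\ell(f,g,h)$ correspond (up to scalars and up to the $\mathfrak{S}_3$/$\PGL_2$-ambiguity of a rank $3$ quadric) to $f^2 h$, $g^2 h$ and $fgh$ in $R_{2d}$ (viewed inside $\Sym^2 R_d$ via squaring). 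Comparing the two representations, $\{f_1^2 h_1,\ g_1^2 h_1,\ f_1 g_1 h_1\}$ and $\{f_2^2 h_2,\ g_2^2 h_2,\ f_2 g_2 h_2\}$ span the same rank-$3$ configuration, hence the multiset of degree-$2d$ forms agrees up to the finitely many symmetries of a rank $3$ quadric. Unique factorization in $R$ then forces $h_1$ and $h_2$ to share a common factor and the pencils $\langle f_1,g_1\rangle$, $\langle f_2,g_2\rangle$ to differ only by multiplication by such a factor: writing $C = \gcd$-type common factor extracted from the pencils and $D$ from the $h$'s, one arrives at $f_1 = C f'$, $g_1 = C g'$, $h_1 = D^2 h'$ (after rescaling) with $C, D$ linearly independent — the linear independence being exactly what distinguishes the two preimages, since if $C, D$ were proportional the two data would coincide. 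The bound $s \le \min\{\ell, \lfloor\frac{d-2\ell}{2}\rfloor\}$ is automatic from degree count: $\deg C = s \le \ell$ and $2s = \deg D^2 \le d-2\ell$.

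The main obstacle is making the ``rank $3$ quadric determines its three linear forms up to $\PGL_2$'' step fully rigorous in the present coordinates, i.e.\ passing cleanly between a rank $3$ element of $S_2 = \Sym^2 R_d$ and the three forms $f^2h, g^2h, fgh \in R_{2d}$, and then running unique factorization to isolate the common factors $C$ and $D$. I expect this factorization bookkeeping — in particular ruling out ``accidental'' coincidences where the $\PGL_2$-ambiguity mixes the three forms in a way not of the stated shape — to be the technical heart of the argument; everything else is degree counting and the explicit computation in the $(i)\Rightarrow(ii)$ direction.
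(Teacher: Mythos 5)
Your direction $(i)\Rightarrow(ii)$ is correct and is exactly the paper's argument: from $(Cf')^2(D^2h') = (Df')^2(C^2h')$ etc.\ one gets $\widetilde{Q_\ell}(\langle Cf',Cg'\rangle,\langle D^2h'\rangle)=\widetilde{Q_\ell}(\langle Df',Dg'\rangle,\langle C^2h'\rangle)$, and linear independence of $C,D$ (hence of $C^2,D^2$) makes the two preimages distinct. You should commit to this second preimage rather than hedging between it and $\langle CDf',CDg'\rangle$, but the idea is right.

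The direction $(ii)\Rightarrow(i)$ has a genuine gap. You assert that the two triples $\{f_1^2h_1,g_1^2h_1,f_1g_1h_1\}$ and $\{f_2^2h_2,g_2^2h_2,f_2g_2h_2\}$ must ``agree up to the finitely many symmetries of a rank $3$ quadric,'' but the stabilizer of a rank $3$ quadratic form $L_1L_2-L_3^2$ acting on its presentations is the orthogonal group $\mathrm{O}_3$, which is positive-dimensional (you yourself invoke $\PGL_2$ earlier in the same sentence). So there is a continuous family of ways to write $q$ as $L_1L_2-L_3^2$, and the multiset comparison you want to feed into unique factorization is not available for free; what cuts the continuous ambiguity down is precisely the constraint that the three linear forms come from degree-$2d$ forms with the special factorization shape $f^2h,\,g^2h,\,fgh$. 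Establishing that is the entire content of the proof, and you explicitly defer it. The paper handles it by normalizing representatives with a lexicographic monomial order (leading coefficients $1$, ${\rm multideg}(f)>{\rm multideg}(g)$), comparing leading terms of the two expressions of $q$ to conclude the scalar is $1$ and that $v^2w=g^2h$ \emph{exactly}, and then analyzing the resulting identity
$$\varphi(u^2w-f^2h)\,\varphi(v^2w)=\varphi(uvw+fgh)\,\varphi(uvw-fgh)$$
in the UFD $S$ (matching linear factors, with separate treatment of the case where both sides vanish) to extract the common factor $C=g/\gcd(g,v)$, show it is nonconstant, that $D^2\mid h$, and that $C\mid f$. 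Without some substitute for this leading-term normalization and factor-matching, your sketch does not close; the ``$\PGL_2$ versus finite'' slip is exactly where an accidental continuous coincidence would have to be ruled out.
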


\begin{proof}
$(i) \Longrightarrow (ii)$ : Note that
$$\widetilde{Q_\ell} (\langle Cf' , Cg' \rangle , \langle D^2 h' \rangle) = \widetilde{Q_\ell} (\langle Df' , Dg' \rangle , \langle C^2 h' \rangle)$$
Also, $(\langle Cf' , Cg' \rangle , \langle D^2 h' \rangle)$ and $(\langle Df' , Dg' \rangle , \langle C^2 h' \rangle)$ are different elements of $ \mathbb{G}(1, \mathbb{P}^{p}) \times \mathbb{P}^{q}$ since $C$ and $D$ are $\mathbb{K}$-linearly independent. Therefore $\widetilde{Q_\ell} ^{-1} (q)$ has more than two distinct elements.

\noindent $(ii) \Longrightarrow (i)$ : Before we begin, let us clarify a few things.

We will use the lexicographical order for monomials of $R$, with $x_0 > x_1 > \dots > x_n$. Additionally, we will apply the lexicographical monomial order in $S$, where $z_I > z_J$ if $I > J$ in the lexicographical order on $\Z^{n+1}_{\geq 0}$.
Note that $a > b$ in $R_d$ if and only if $\varphi(a) > \varphi(b)$ in $S_1$ where $\varphi: R_d \rightarrow S_1$ is the isomorphism in Notation and Remarks \ref{not and rmk:Q-morphism}.

For a point $(\langle f, g \rangle, \langle h \rangle)$ of $\mathbb{G} (1,\P^{p} ) \times \P^{q}$ where $f, g \in R_\ell$ and $h \in R_{d-2\ell}$, we always assume that
the leading coefficients of $f, g$ and $h$ are $1$, and that $f$ and $g$ have degrees satisfying
\begin{equation}\label{eq:multidegree}
{\rm multideg}(f) > {\rm multideg}(g).
\end{equation}
This is possible since $\langle f, g \rangle$ is an element of $\mathbb{G} (1,\P^{p} )$.

Now, suppose that $\widetilde{Q_\ell} ^{-1} (q)$ contains two distinct points $(\langle f, g \rangle, \langle h \rangle)$ and $(\langle u, v \rangle, \langle w \rangle)$. Then it holds that
\begin{align} \label{eqqqq}
\varphi(u^2 w)\varphi(v^2 w) - \varphi(uvw)^2 = k \varphi(f^2 h)\varphi(g^2 h) - k \varphi(fgh)^2
\end{align}
for some $k \in \mathbb{K}$.
Also, the assumption (\ref{eq:multidegree}) implies that
$${\rm multideg}(f^2h) \ > \ {\rm multideg}(fgh) \ > \ {\rm multideg}(g^2h)$$
and
$${\rm multideg}(u^2w) \ > \ {\rm multideg}(uvw) \ > \ {\rm multideg}(v^2w).$$
Note that we can write $\varphi(u^2w)$ and $\varphi(f^2h)$ as
\[
\varphi(u^2w) = z_{{\rm multideg}(u^2w)}+\mbox{(lower terms)} \quad \mbox{and} \quad \varphi(f^2h) = z_{{\rm multideg}(f^2h)}+\mbox{(lower terms)} .
\]
By comparing the multidegrees in the equality \eqref{eqqqq}, we can simplify it as follows.
\begin{align} \label{redeq}
z_{{\rm multideg}(u^2w)} \varphi(v^2w) - k z_{{\rm multideg}(f^2h)} \varphi(g^2h) = 0
\end{align}
Thus, to satisfy \eqref{redeq}, the multidegrees of $f^2h$ and $u^2w$ must be same.
Since the leading coefficients of $\varphi(v^2w)$ and $\varphi(g^2h)$ are both $1$, this implies that $k=1$ and $v^2w=g^2h$.
Substituting $g^2h$ with $v^2w$ in \eqref{eqqqq}, we get
\begin{align} \label{changedeq}
\varphi(u^2w - f^2h)\varphi(v^2w) = \varphi(uvw+fgh)\varphi(uvw-fgh)  .
\end{align}
Let $g=g' \cdot C$ and $v=g' \cdot D$ where $g'=gcd(g, v)$.
We will prove that $C$ (and hence $D$) is nonconstant, $D^2$ divides $h$, and $C$ divides $f$.
First, suppose that $C$ is a constant.
This implies that $v=g$ and $w=h$, since $v^2w=g^2h$ and all the leading coefficients of $v, w, g, h$ are $1$.
Thus, we can rewrite \eqref{changedeq} as
\[
\varphi(u^2 w - f^2 w)\varphi(v^2 w) = \varphi(uvw+fvw)\varphi(uvw-fvw) ,
\]
which implies that
\[
u^2 w - f^2 w = k (uvw + fvw) \quad \text{or} \quad u^2 w - f^2 w = k(uvw - fvw)
\]
for some $k \in \mathbb{K}$. Factoring this, we get
\[
w(u+f)(u-f) = kvw (u+f) \quad \text{or} \quad w(u+f)(u-f) = kvw(u-f),
\]
and after cancelling common factors on both sides, we obtain
\[
f = u-kv \quad \text{or} \quad f = -u+kv .
\]
In consequence, we have $\langle u, v \rangle = \langle f, g \rangle$ and $\langle w \rangle = \langle h \rangle $.
Obviously, this means that the two points $(\langle f, g \rangle, \langle h \rangle)$ and $(\langle u, v \rangle, \langle w \rangle)$ are not distinct, which is a contradition.
Therefore, $C$ and $D$ are not constants.
Next, the equality $v^2w=g^2h$ gives us $D^2w=C^2h$.
This shows that $h$ is divisible by $D^2$ since $C$ and $D$ are relatively prime.
Now, we will show that $f$ is divisible by $C$.
To this aim, we will divide the proof into two cases: when both sides of equation \eqref{changedeq} are zero and when they are nonzero.
If both sides of \eqref{changedeq} are nonzero, then we have
\[
v^2w = k(uvw+fgh) \quad  \text{or} \quad v^2w = k(uvw-fgh)
\]
for some $k \in \mathbb{K}$.
Multiplying by $v$ both sides, we get
\[
v^3w = kuv^2w + kfghv  \quad  \text{or} \quad   v^3w = kuv^2w - kfghv .
\]
Applying the substitution $v^2w=g^2h$, this becomes
\[
g^2h(ku-v) = - kfghv \quad  \text{or} \quad g^2h(ku-v) = + kfghv .
\]
By cancelling common factors on both sides, it follows that
\[
g(ku-v) = -kfv \quad \text{or} \quad g(ku-v) = kfv .
\]
Since $C$ and $v$ are relatively prime, we conclude that $C$ divides $f$.
On the other hand, if both sides of \eqref{changedeq} are zero, then $uvw = fgh$ or $uvw = -fgh$.
As above, by multiplying by $v$ both sides and applying the substitution $v^2w=g^2h$, we can show that $C$ divides $f$.

Combining the facts proven above, $(\langle f, g \rangle, \langle h \rangle)$ can be rewritten as
\[
(\langle f, g \rangle, \langle h \rangle) = (\langle C  f', C  g' \rangle, \langle D^2  h' \rangle),
\]
which completes the proof.
\end{proof}

\noindent \textbf{Proof of Theorem \ref{thm:Singularity of W_ell (X))}} $(1)$ Suppose the morphism $\widetilde{Q_1}$ is not injective at some point $(\langle f, g \rangle, \langle h \rangle) \in \G \times \P$, then by Lemma \ref{injlem}, $f$ and $g$ should have a common factor. This is impossible since $f$ and $g$ are $\mathbb{K}$-linearly independent linear forms. In the same manner, suppose the morphism $\widetilde{Q_e}$ is not injective at some point $(\langle f, g \rangle, \langle h \rangle) \in \G \times \P$, then by Lemma \ref{injlem}, $h$ has a multiple factor. But for $h \in R_1$, this is a contradiction.

\noindent $(2)$ Since $\widetilde{Q_\ell}$ is a birational morphism, the image $W_\ell (X)$ will be a singular variety whenever the morphism is not injective at some point. Lemma \ref{injlem} shows that the singular locus of $W_\ell (X)$ contains the image of the subset
$$\Sigma_m := \{ (\langle Cf', Cg' \rangle, \langle M^2h' \rangle) \mid f', g' \in R_{\ell-m}, \ h' \in R_{d-2\ell-2m} \ \text{and} \ C, M \in R_m \}$$
of $\G(1, \P(R_\ell)) \times \P (R_{d-2\ell} )$ for all $1 \leq m \leq min\{\ell, \lfloor \frac{d-2\ell}{2} \rfloor \}$. Also, $\Sigma_m$ is the image of the following finite morphism
\begin{align*}
\P(R_m) \times \G(1, \P(R_{\ell-m})) \times \P(R_m) \times \P(R_{d-2\ell-2m}) \mspace{5mu} &\longrightarrow \mspace{5mu} \G(1, \P(R_\ell)) \times \P (R_{d-2\ell} ).\\
([C], \langle f', g' \rangle ,[M], [h'] ) \mspace{50mu} &\longmapsto \mspace{10mu} (\langle Cf', Cg' \rangle , [ M^2 h'])
\end{align*}
Thus the dimension of $Sing(W_\ell) (X)$ is at least
$$\dim \P(R_m) + \dim \G(1, \P(R_{\ell-m}))+\dim \P(R_m) + \dim \P(R_{d-2\ell-2m}),$$
which is equal to
$$ \binom{n+m}{n} -1 + 2\binom{n+\ell-m}{n}-4 + \binom{n+m}{n} -1 + \binom{n+d-2\ell-2m}{n}-1 .$$
In consequence, it holds that
\begin{align*}
\dim {\rm Sing}(W_\ell (X)) \geq \max_{m} \left\{2\binom{n+m}{n}+2\binom{n+\ell-m}{n}+\binom{n+d-2\ell-2m}{n}-7 \right\}.
\end{align*}
Finally, the desired inequality about $\dim {\rm Sing}(W_\ell (X))$ for the case $n=1$ follows directly from the above result. \qed \\

\section{Examples and further discussions}
\noindent Let $X = \nu_d (\P^n ) \subset \P^r$ be the $d$th Veronese embedding of $\P^n$. In Theorem \ref{thm:Singularity of W_ell (X))}.$(1)$, it is shown that the morphism
$$\widetilde{Q_e} : \mathbb{G} (1,\P^p ) \times \P^q \rightarrow W_e (X)$$
is always injective.
So, the next natural question is whether $\widetilde{Q_{\ell}}$ is an isomorphism or not.
In this section, we will solve this question through calculations for the cases where $(n,d)$ is equal to $(1,4)$ and $(1,5)$.
Indeed, $\widetilde{Q_2}$ is an isomorphism if $(n,d)=(1,4)$ (see Example \ref{d even}.$(1)$) while it is not if $(n,d) = (1,5)$ (see Example \ref{ex:d=5,l=2}). In general cases, or even when $n=1$ and $d>5$, we do not know whether $W_e (X)$ is smooth or singular.

\begin{example} \label{d even}
Suppose that $d=2e$ is an even integer and $e \geq 2$. Then the map $\widetilde{Q_e}$ in Theorem \ref{thm:minimal and birational} becomes
$$\widetilde{Q_{e}} : \mathbb{G} (1,\P^e )    \rightarrow W_e (\mathcal{C}) \subset \mathbb{P}^{\binom{2e}{2}-1}.$$
This morphism is defined by the $\mathbb{K}$-linearly independent subset $\{ \alpha_{ij} ~|~ 0 \leq i < j \leq d-1 \}$ of $H^0 (\mathbb{G} (1,\P^e ) , \mathcal{O}_{\mathbb{G} (1,\P^e )}(2))$.
Note that
$$h^0(\mathbb{G} (1,\P^e ), \mathcal{O}_{\mathbb{G} (1,\P^e )}(2)) = \frac{1}{12} e (e+1)^2 (e+2).$$

\noindent $(1)$ When $e =2$, we have
$$h^0(\mathbb{G} (1,\P^2 ), \mathcal{O}_{\mathbb{G} (1,\P^2 )}(2)) = 6 = {4 \choose 2}$$
and hence $W_2 (\mathcal{C})$ is equal to the Veronese surface $\nu_2 (\mathbb{G} (1,\P^2 ))$.

\noindent $(2)$ If $e \geq 3$, then $h^0(\mathbb{G} (1,\P^e ), \mathcal{O}_{\mathbb{G} (1,\P^e )}(2))$ is strictly bigger than ${2e \choose 2}$ and hence the map
$$\widetilde{Q_{e}} : \mathbb{G} (1,\P^e ) \rightarrow \mathbb{P}^{\binom{2e}{2}-1}$$
is equal to a linear projection of the linearly normal embedding of $\mathbb{G} (1,\P^e )$ by the line bundle $\mathcal{O}_{\mathbb{G} (1,\P^e )}(2)$.
\end{example}

\begin{example}\label{ex:d=4}
Let $\mathcal{C} \subset \P^4$ be the standard rational normal curve of degree $4$. Then its homogeneous ideal $I(\mathcal{C})$ is minimally generated by the following six quadratic polynomials:
\begin{align*}
Q_{0, 1}=z_0z_2-{z_1}^2, \quad Q_{0, 2}=z_0z_3-{z_1}z_2, \quad Q_{0, 3}=z_0z_4-{z_1}z_3 ,\\
Q_{1, 2}=z_1z_3-{z_2}^2, \quad Q_{1, 3}=z_1z_4-{z_2}z_3, \quad Q_{2, 3}=z_2z_4-{z_3}^2 .
\end{align*}
Theorem \ref{thm:minimal and birational}.$(1)$ shows that $\Phi_3 (\mathcal{C})$ is the union of $W_1 (\mathcal{C})$ and $W_2 (\mathcal{C})$. Furthermore, $W_1 (\mathcal{C}) \subset \P^5$ and $W_2 (\mathcal{C}) \subset \P^5$ are Veronese surfaces by Theorem \ref{thm:minimal and birational}.$(3)$ and Example \ref{d even}.$(1)$. In this example, we will try to describe these two irreducible components of $\Phi_3 (\mathcal{C})$ very explicitly.

\noindent $(1)$ From $\mathcal{O}_{\P^1} (4) = \mathcal{O}_{\P^1} (1)^2 \otimes \mathcal{O}_{\P^1} (2)$, we need to express
$$Q_1(x, y, a x^2+bxy+cy^2)$$
as a linear combination of the above six quadratic polynomials. Indeed, one can check that
\begin{align*}
Q_1(x, y, a x^2+bxy+cy^2) & = (az_0 +bz_1 +cz_2) (az_2 +bz_3 +cz_4) - (az_1 +bz_2 +cz_3)^2  \\
&= {a}^2 Q_{0, 1} + {ab} Q_{0, 2} + {ac} Q_{0, 3} + (b^2-ac) Q_{1, 2} + {bc} Q_{1, 3} + {c^2} Q_{2, 3} .
\end{align*}
Thus $W_1 (\mathcal{C})$ can be parameterized as
\begin{equation}\label{eq:W1 para}
W_1(\mathcal{C}) =\left\{ [a^2 : ab : ac : b^2-ac : bc : c^2] \mid [a:b:c] \in \mathbb{P}^2 \right\} .
\end{equation}

\noindent $(2)$ From $\mathcal{O}_{\P^1} (4) = \mathcal{O}_{\P^1} (2)^2$, we need to express
$$Q_2 (A_0x^2+A_1xy+A_2y^2, B_0x^2+B_1xy+B_2y^2, 1)$$
as a linear combination of the above six quadratic polynomials. Indeed, one can check that
$$Q_2 (A_0x^2+A_1xy+A_2y^2, B_0x^2+B_1xy+B_2y^2, 1) = FG-H^2$$
where
$$F = A _{0}^{2} z _{0} +2A _{0} A _{1} z _{1} +(A _{1}^{2} +2A _{0} A _{2} )z _{2} +2A _{1} A _{2} z _{3} +A _{2}^{2} z _{4} , $$
$$G = B _{0}^{2} z _{0} +2B _{0} B _{1} z _{1} +(B _{1}^{2} +2B _{0} B _{2} )z _{2} +2B _{1} B _{2} z _{3} +B _{2}^{2} z _{4} $$
and
$$H = A_0 B_0 z_0 + (A_0 B_1 +A_1 B_0 )z_1 + (A_0 B_2 +A_1 B_1 + A_2 B_0 )z_2 +(A_1 B_2 +A_2 B_1 ) z_3 +A_2 B_2 z_4 . $$
Then it can be shown that $Q_2 (A_0x^2+A_1xy+A_2y^2, B_0x^2+B_1xy+B_2y^2, 1)$ is equal to
$${a}^2 Q_{0, 1} + {2ab} Q_{0, 2} + {b^2} Q_{0, 3} + (b^2+2ac) Q_{1, 2} + {2bc} Q_{1, 3} + {c^2} Q_{2, 3}$$
where $a= A_0B_1-A_1B_0$, $b = A_0B_2-A_2B_0$ and $c = A_1B_2-A_2B_)$. Thus $W_2 (\mathcal{C})$ can be parameterized as
\begin{equation}\label{eq:W2 para}
W_2(\mathcal{C}) =\{ [a^2 : 2ab : b^2 : b^2+2ac : 2bc : c^2] \mid [a:b:c] \in \mathbb{P}^2 \} .
\end{equation}

\noindent $(3)$ Let $D$ denote the intersection of $W_1 (\mathcal{C})$ and $W_2 (\mathcal{C})$. From (\ref{eq:W1 para}) and (\ref{eq:W2 para}), we get the homogeneous ideals $I ( W_1 (\mathcal{C}))$ and $I (W_2 (\mathcal{C}))$ of $W_1 (\mathcal{C})$ and $W_2 (\mathcal{C})$ in the polynomial ring $\mathbb{K} [z_0 , z_1 , z_2 , z_3 , z_4 , z_5 ]$. Let $I(D)$ denote the homogeneous ideal of $D$ in $\P^5$. Thus
$$I(D) = \sqrt{I ( W_1 (\mathcal{C}))+I (W_2 (\mathcal{C}))}.$$
By the computation using MACAULAY2 (cf. \cite{GS}), we can show that $I(D)$ contains the linear form $3z_2 - z_3$ and that $D$ in $V(3z_2-z_3) = \mathbb{P}^4$ is a rational normal quartic curve.

\noindent $(4)$ From $(1) \sim (3)$, we conclude that the minimal irreducible decomposition of $\Phi_3 (\mathcal{C})$ is of the form
$$\Phi_3(\mathcal{C}) = W_1 (\mathcal{C}) \cup W_2 (\mathcal{C})$$
where $W_1 (\mathcal{C})$ and $W_2 (\mathcal{C})$ are Veronese surfaces in $\P^5$ such that $W_1 (\mathcal{C}) \cap W_2 (\mathcal{C})$ is a rational normal quartic curve.
\end{example}

\begin{example}\label{ex:d=5,l=2}
Let $\mathcal{C} \subset \P^5$ be the standard rational normal curve of degree $5$. Thus its homogeneous ideal $I(\mathcal{C})$ is generated by the following ten quadratic polynomials.
\begin{align*}
&Q_{0, 1}=z_0z_2-{z_1}^2, \quad Q_{0, 2}=z_0z_3-{z_1}z_2, \quad Q_{0, 3}=z_0z_4-{z_1}z_3, \quad Q_{0, 4}=z_0z_5-{z_1}z_4 \\
&Q_{1, 2}=z_1z_3-{z_2}^2, \quad Q_{1, 3}=z_1z_4-{z_2}z_3, \quad Q_{1, 4}=z_1z_5-{z_2}z_4, \quad Q_{2, 3}=z_2z_4-{z_3}^2 \\
&Q_{2, 4}=z_2z_5-{z_3}z_4, \quad Q_{3, 4}=z_3z_5-{z_4}^2
\end{align*}
From $\mathcal{O}_{\P^1} (5) = \mathcal{O}_{\P^1} (2)^2 \otimes \mathcal{O}_{\P^1} (1)$, we need to express $Q_2 (u,v,w)$ for
$$u = A_0 x^2 + A_1 xy + A_2 y^2 , \quad v = B_0 x^2 + B_1 xy + B_2 y^2 \quad \mbox{and} \quad w = C_0 x + C_1 y$$
as a linear combination of the above ten quadratic polynomials. Write
$$Q_2(u, v, w) = \sum_{0 \le i < j \le 4} \alpha_{i,j} Q_{i,j}.$$
By expanding
\begin{align*}
Q_2(u, v, w)  &= \varphi\left((A_0 x^2 + A_1 xy + A_2 y^2)^2 (C_0 x + C_1 y )\right) \varphi\left((B_0 x^2 + B_1 xy + B_2 y^2)^2 (C_0 x + C_1 y )\right) \\
              &\quad - \varphi\left((A_0 x^2 + A_1 xy + A_2 y^2)(B_0 x^2 + B_1 xy + B_2 y^2)(C_0 x + C_1 y )\right)^2 ,
\end{align*}
one can check the following equalities:
    \begin{align*}
    \alpha_{0, 1} &= \beta_{0, 2} = q_{01}^2 C_0^2 ;  \\
    \alpha_{0, 2} &= \beta_{0, 3} = 2q_{01}q_{02}C_0^2+q_{01}^2C_0C_1 \\
    \alpha_{0, 3} &= \beta_{0, 4} = q_{02}^2C_0^2+2q_{01}q_{02}C_0C_1 \\
    \alpha_{0, 4} &= \beta_{0, 5} = q_{02}^2C_0C_1 \\
    \alpha_{1,2}  &= \beta_{0, 4} + \beta_{1, 3} = q_{02}^2C_0^2 +2q_{01}q_{12}C_0^2+2q_{01}q_{02}C_0C_1+q_{01}^2C_1^2 \\
    \alpha_{1, 3} &= \beta_{0, 5} + \beta_{1, 4} = 2q_{02}q_{12}C_0^2+2q_{02}^2C_0C_1 +2q_{01}q_{12}C_0C_1 +2q_{01}q_{02}C_1^2 \\
    \alpha_{1, 4} &= \beta_{1, 5} = 2q_{02}q_{12}C_0C_1+q_{02}^2C_1^2 \\
    \alpha_{2, 3} &= \beta_{1, 5} + \beta_{2, 4} = q_{12}^2C_0^2 +2q_{02}q_{12}C_0C_1+q_{02}^2C_1^2+2q_{01}q_{12}C_1^2 \\
    \alpha_{2, 4} &= \beta_{2, 5} = q_{12}^2C_0C_1+q_{02}q_{12}C_1^2 \\
    \alpha_{3, 4} &= \beta_{3, 5} = q_{12}^2C_1^2
    \end{align*}
Thus $W_2(\mathcal{C})$ is the image of the morphism from $\mathbb{P}^2 \times \mathbb{P}^1$ to $\mathbb{P}^9$ defined by
\begin{equation}\label{eq:parameterization Y}
([q_{0,1} : q_{0,2} : q_{1,2} ] , [C_0 : C_1 ] ) \mapsto [\alpha_{0, 1}:\alpha_{0, 2}:\alpha_{0, 3}:\alpha_{0, 4}:\alpha_{1, 2}:\alpha_{1, 3}:\alpha_{1, 4}:\alpha_{2, 3}:\alpha_{2, 4}:\alpha_{3, 4}].
\end{equation}
This shows that $Y:=W_2(\mathcal{C})$ is a linear projection $\pi$ of $X:=\nu_2(\mathbb{P}^2 \times \mathbb{P}^1) \subset \mathbb{P}^{17}$ to $\mathbb{P}^9$. Furthermore, $\pi : X \rightarrow Y$ is injective by Theorem \ref{thm:Singularity of W_ell (X))}.$(1)$. Consider the short exact sequence
$$0 \ \longrightarrow \ \mathcal{O}_Y \ \longrightarrow \ \pi_{*}\mathcal{O}_X \ \longrightarrow  \ \mathcal{F} := \pi_{*}\mathcal{O}_X / \mathcal{O}_Y \ \longrightarrow \ 0$$
induced by $\pi$. The hilbert polynomials of $X$ and $Y$ are respectively $4t^3+8t^2+5t+1$ and $4t^3+8t^2+5t-5$. We obtain the hilbert polynomial of $Y$ through MACAULAY2 computation using the parametrization of $Y$ in (\ref{eq:parameterization Y}) (cf. \cite{GS}). It follows that the hilbert polynomial of $\mathcal{F}$ is equal to
$$HP_{X}(t) - HP_{Y}(t) = (4t^3+8t^2+5t+1) - (4t^3+8t^2+5t-5) = 6 .$$
In consequence, $Y$ is singular and the number of singular points of $Y$ is at most six.
\end{example}


\begin{thebibliography}{0000000}
\bibitem[BCR]{BCR} W. Bruns, A. Conca, and T. R\"{o}mer, {\em Koszul homology and syzygies of Veronese subalgebras},
Math. Ann. (2011) 351, 761--779.

\bibitem[E]{E} D. Eisenbud, {\em The Geometry of Syzygies}, no.229, Springer-Velag New York, (2005)

\bibitem[EKS]{EKS} D. Eisenbud, J. Koh, and M. Stillman, {\em Determinantal equations for curves of high degree}, Amer. J. Math. 110 (1988), 513--539.

\bibitem[GS]{GS} D. R. Grayson and M. E. Stillman, \newblock{Macaulay 2} -- {A Software System for Research in Algebraic Geometry}. \newblock{http://www.math.uiuc.edu/Macaulay2/}

\bibitem[Gr1]{Gr1} M. Green,{\em Koszul cohomology and the geometry of projective varieties I}, J. Differ. Geom., 19 (1984), 125--171.

\bibitem[Gr2]{Gr2} M. Green, {\em Koszul cohomology and the geometry of projective varieties II}, J. Differ. Geom., 20 (1984), 279--289.

\bibitem[HLMP]{HLMP} K. Han, W. Lee, H. Moon and E. Park, {\em Rank $3$ quadratic generators of Veronese embeddings}, Compositio Math. 157 (2021), 2001--2025.

\bibitem[JPW]{JPW} T. Jozefiak, P. Pragacz and J. Weyman, {\em Resolutions of determinantal varieties and tensor complexes associated with
symmtrric and antisymmetric matrices}, Asterisque 87-88, (1981), 109--189.

\bibitem[OP]{OP} G. Ottaviani and R. Paoletti, {\em Syzygies of Veronese embeddings}, Compositio Math., 125 (2001), 31--37.

\bibitem[Park]{Park} E. Park, {\em On rank $3$ quadratic equations of projective varieties}, Trans. Amer. Math. Soc. 377 (2024), no. 3, 2049–-2064.

\bibitem[Pu]{Pu} M. Pucci, \emph{The Veronese variety and Catalecticant matrices}, J. Algebra 202 (1998), 72--95.

\bibitem[SS]{SS} J. Sidman and G. Smith, {\em Linear determinantal equations for all projective schemes}, Algebra and Number Theory, Vol. 5 (2011), no. 8, 1041--1061.

\bibitem[Su]{Su} S. Sullivant, \emph{Combinatorial symbolic powers}, J. Algebra 319 (2008), no. 1, 115–-142.

\bibitem[Mu]{Mu} S, Mukai, {\em  An introduction to invariants and moduli}, Cambridge University Press, Vol. 81. {2003}, 244--245.
\end{thebibliography}
\end{document}